\newtheorem{thm}{Theorem}[section]
\newtheorem{lem}[thm]{Lemma}
\newtheorem{cor}[thm]{Corollary}
\theoremstyle{definition}
\newtheorem{defn}[thm]{Definition}
\newtheorem{conj}[thm]{Conjecture}
\newtheorem{qstn}[thm]{Question}
\theoremstyle{remark}
\newtheorem{lem*}{Lemma}
\title{Minimum Degrees of Minimal Ramsey Graphs for Almost-Cliques}
\author{Andrey Grinshpun}
\thanks{The first author's research is partially supported by a National Physical Sciences Consortium Fellowship}
\author{Raj Raina}
\author{Rik Sengupta}
\date{}
\begin{document}

\begin{abstract}
For graphs $F$ and $H$, we say $F$ is \textit{Ramsey for $H$} if every $2$-coloring of the edges of $F$ contains a monochromatic copy of $H$. The graph $F$ is \textit{Ramsey $H$-minimal} if $F$ is Ramsey for $H$ and there is no proper subgraph $F'$ of $F$ so that $F'$ is Ramsey for $H$. Burr, Erd\H os, and Lov\' asz defined  $s(H)$ to be the minimum degree of $F$ over all Ramsey $H$-minimal graphs $F$. Define $H_{t,d}$ to be a graph on $t+1$ vertices consisting of a complete graph on $t$ vertices and one additional vertex of degree $d$. We show that $s(H_{t,d})=d^2$ for all values $1<d\le t$; it was previously known that $s(H_{t,1})=t-1$, so it is surprising that $s(H_{t,2})=4$ is much smaller.

We also make some further progress on some sparser graphs. Fox and Lin observed that $s(H)\ge 2\delta(H)-1$ for all graphs $H$, where $\delta(H)$ is the minimum degree of $H$; Szab\'o, Zumstein, and Z\"urcher investigated which graphs have this property and conjectured that all bipartite graphs $H$ without isolated vertices satisfy $s(H)=2\delta(H)-1$. Fox, Grinshpun, Liebenau, Person, and Szab\'o further conjectured that all triangle-free graphs without isolated vertices satisfy this property. We show that $d$-regular $3$-connected triangle-free graphs $H$, with one extra technical constraint, satisfy $s(H) = 2\delta(H)-1$; the extra constraint is that $H$ has a vertex $v$ so that if one removes $v$ and its neighborhood from $H$, the remainder is connected.
\end{abstract}

\maketitle 

\section{Introduction}
If $F$ and $H$ are finite graphs, we write $F\to H$ and say $F$ is \textit{Ramsey for $H$} to mean that every $2$-coloring of the edges of $F$ with the colors red and blue contains a monochromatic copy of $H$. For any fixed graph $H$, the collection of graphs that are Ramsey for it is upwards closed; that is, if $F'$ is a subgraph of $F$ and $F'$ is Ramsey for $H$, then $F$ is also Ramsey for $H$. Therefore, in order to understand the collection of graphs that are Ramsey for $H$, it is sufficient to understand the graphs that are minimal with this property; we call these graphs \textit{Ramsey $H$-minimal}, or \textit{$H$-minimal} for short, and denote the collection of these Ramsey $H$-minimal graphs by $\mathcal{M}(H)$. One of the foundational results in Ramsey theory, Ramsey's theorem, states that for all graphs $H$, the set $\mathcal{M}(H)$ is nonempty \cite{fpr}.

The fundamental goal of graph Ramsey theory is to understand the properties of the graphs in the family $\mathcal{M}(H)$, given the graph $H$. Several questions about the extremal properties of graphs in $\mathcal{M}(H)$ have been asked throughout the years. One of the most famous such questions is the Ramsey number of $H$, denoted by $r(H)$, which asks for the minimum number of vertices of any graph in $\mathcal{M}(H)$. This number is only known for very few classes of graphs $H$. Of particular interest is $r(K_t)$ ($K_t$ is the complete graph on $t$ vertices), which is known to be at least $2^{t/2}$ \cite{e} and at most $2^{2t}$ \cite{es}. Despite these bounds being over 60 years old, the constants in the exponents have not been improved, making this one of the oldest and most difficult open problems in combinatorics. The study of $\mathcal{M}(H)$ has also extended in various other directions. In this paper, we are interested in the following value, first studied by Burr, Erd\H os, and Lov\'asz \cite{bel}:
\begin{equation*}
s(H):=\displaystyle \min_{F\in \mathcal{M}(H)} \delta(F)
\end{equation*}
where $\delta(F)$ is the minimum degree of $F$. Because of the $H$-minimality condition imposed on $F$, one cannot arbitrarily add vertices of small degree to $F$.

Define $H_{t,d}$ to be the graph on $t+1$ vertices which consists of a clique on $t$ vertices and an additional vertex of degree $d$. In \cite{bel} it is shown that $s(H_{t,t})=t^2$, in \cite{szz} it is shown that $s(H_{t,0})=(t-1)^2$, and in \cite{fglps} it is shown that $s(H_{t,1})=t-1$. We find $s(H_{t,d})$ for all $1 < d < t$, showing that
$$
s(H_{t,d})=
\begin{cases}
d^2 & \text{if }1<d < t\\
t-1 & \text{if }d = 1\\
(t-1)^2 & \text{if }d = 0.
\end{cases}
$$
The discrepancy between the values of $s(H_{t,0})$ and $s(H_{t,1})$ was already known, but the discrepancy between $s(H_{t,1})$ and $s(H_{t,2})$ is perhaps more surprising, as both graphs are connected. It is also interesting to note that, if we take $d$ large enough compared to $t$, then the resulting graphs are the first time $s(H)$ has been determined for very well-connected graphs which are not vertex-transitive; much work has been focused around computing $s(H)$ where $H$ is either a sparse graph or is vertex-transitive, which are somewhat easier cases to handle.

Graphs $H$ that satisfy $s(H)=2\delta(H)-1$ are called Ramsey simple. In \cite{szz} it is shown that many bipartite graphs satisfy $s(H)=2\delta(H)-1$, including forests, even cycles, and connected, balanced bipartite graphs (a bipartite graph is balanced if both parts have the same size). It was further conjectured that all bipartite graphs without isolated vertices are Ramsey simple. In \cite{fglps2}, the authors show that all $3$-connected bipartite graphs are Ramsey simple. They also show that in any $3$-connected graph $H$, if there is a minimum-degree vertex $v$ so that its neighborhood is contained in an independent set of size $2\delta(H)-1$, then $s(H) = 2\delta(H) - 1$. They further conjectured that all triangle-free graphs without isolated vertices are Ramsey simple. In this paper, we prove that any $d$-regular $3$-connected triangle-free graph, with one additional technical constraint, is Ramsey simple. These constraints are not severely restrictive, since a random $d$-regular triangle-free graph (for a fixed constant $d\ge 3$) satisfies all of these constraints with high probability.

This paper is arranged as follows. In Section \ref{DefAndBound} we introduce the notation necessary for the paper and some known simple bounds on $s(H)$. In Section \ref{CompleteWithAddedVertex} we compute the exact value of $s(H)$ for the graphs $H_{t, d}$ for all $0 \leq d \leq t$, expanding on the results of \cite{bel}, \cite{fglps}, and \cite{szz}. In Section \ref{TriangleFree} we find a new class of Ramsey simple graphs. Finally in Section \ref{openproblems} we wrap up with some open questions and directions of further research. This work builds on the findings and techniques of \cite{fglps2}, \cite{fglps}, \cite{fl}, and \cite{szz}.

\section{Preliminaries and background}\label{DefAndBound}
\subsection{Standard Definitions}
Given a graph $H$, the \textit{neighborhood} of a vertex $v\in V(H)$, denoted by $N(v)$, is the set of all vertices in $H$ that are adjacent to $v$ and the \textit{degree} of $v$, denoted by $\operatorname{deg}{(v)}$, is the size of its neighborhood. A graph is \textit{regular} if all vertices have the same degree and it is $d$-regular if all vertices have degree $d$. The \textit{independence number} of a graph $\alpha (H)$ is defined as the size of the largest set of vertices in $H$ that induces an independent set in $H$ (a set that contains no edges), and the \textit{clique number} of a graph $\omega(H)$ is the size of the largest clique in $H$.

Define $G \boxtimes H$ to be the graph obtained by taking disjoint copies of $G$ and $H$ and adding a complete bipartite graph between them. When we write $G_1 \boxtimes G_2 \boxtimes G_3$, we mean there is a complete bipartite graph between \textit{every} pair of the graphs $G_1$, $G_2$, and $G_3$, not just between the pairs $(G_1, G_2)$ and $(G_2, G_3)$.

\subsection{Simple bounds}
The following are simple bounds on $s(H)$.
\begin{thm}[\cite{fl} and \cite{bel}]\label{TrivialBound}
For all graphs $H$, we have 
\begin{center}$2\delta (H)-1 \le s(H) \le r(H)-1$.\end{center}
\end{thm}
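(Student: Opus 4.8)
The plan is to establish the two inequalities separately, each by an elementary argument.

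For the upper bound $s(H) \le r(H) - 1$, I would start from $F = K_{r(H)}$, which is Ramsey for $H$ by the very definition of the Ramsey number $r(H)$. Since the family of graphs Ramsey for $H$ is upward closed and, by Ramsey's theorem, contains a minimal element below any of its members, $F$ has a subgraph $F' \in \mathcal{M}(H)$. Every vertex of $K_{r(H)}$ has degree $r(H)-1$, so every vertex of the subgraph $F'$ has degree at most $r(H)-1$; in particular $\delta(F') \le r(H)-1$, and therefore $s(H) \le \delta(F') \le r(H)-1$.

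For the lower bound $s(H) \ge 2\delta(H)-1$ (the Fox--Lin observation), I would take an arbitrary $F \in \mathcal{M}(H)$ and an arbitrary vertex $v \in V(F)$ and argue that $\deg_F(v) \ge 2\delta(H)-1$. We may assume $H$ has no isolated vertex (otherwise $2\delta(H)-1 \le -1$ and the bound is vacuous), so $F$ has no isolated vertex either, and we may fix an edge $e = vu$ incident to $v$. By minimality, $F - e \not\to H$, so fix a red/blue coloring $\chi$ of $E(F)\setminus\{e\}$ with no monochromatic copy of $H$. Extending $\chi$ by coloring $e$ red must create a monochromatic copy of $H$ in $F$; this copy cannot avoid $e$ (else it would already be monochromatic under $\chi$) and must be red (as it contains the red edge $e$). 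Hence there is a copy $H^r \subseteq F$ of $H$ through $e$ all of whose edges other than $e$ are $\chi$-red. Symmetrically, coloring $e$ blue produces a copy $H^b \subseteq F$ of $H$ through $e$ all of whose edges other than $e$ are $\chi$-blue.

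Now I would count the edges of $F$ at $v$. In $H^r$ the vertex $v$ has degree at least $\delta(H)$, one of those edges being $e$, so at least $\delta(H)-1$ edges of $F$ at $v$ distinct from $e$ are $\chi$-red; likewise at least $\delta(H)-1$ edges of $F$ at $v$ distinct from $e$ are $\chi$-blue. These two edge sets are disjoint (no edge is both colors under $\chi$) and neither contains $e$, so $v$ has at least $1 + (\delta(H)-1) + (\delta(H)-1) = 2\delta(H)-1$ incident edges. Since $F \in \mathcal{M}(H)$ was arbitrary, $s(H) \ge 2\delta(H)-1$. The only steps that need care are the appeal to minimality to obtain the coloring $\chi$ of $F-e$ and the observation that the forced monochromatic copies must pass through $e$ --- without the latter the counting argument collapses --- together with remembering to dispose of the degenerate case $\delta(H)=0$ so that the edge $e$ is guaranteed to exist. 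Beyond that, I do not anticipate any real obstacle.
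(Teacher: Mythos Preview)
Your argument is correct. The upper bound is handled the same way as in the paper (take a Ramsey $H$-minimal graph on $r(H)$ vertices and bound its degrees trivially).

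For the lower bound you take a genuinely different, though equally elementary, route. The paper argues by contradiction: given $F\in\mathcal{M}(H)$ and $v$ with $\deg(v)\le 2\delta(H)-2$, it removes the \emph{vertex} $v$, takes a monochromatic-$H$-free coloring of $F-v$, and extends it to $F$ by splitting $N(v)$ into two parts of size at most $\delta(H)-1$, coloring the edges from $v$ to each part with a single color; then $v$ cannot lie in any monochromatic copy of $H$, contradicting $F\to H$. You instead remove an \emph{edge} $e$ at $v$, fix a good coloring $\chi$ of $F-e$, and observe that each of the two extensions of $\chi$ to $e$ forces a monochromatic copy of $H$ through $e$, from which you read off $\delta(H)-1$ $\chi$-red and $\delta(H)-1$ $\chi$-blue edges at $v$ besides $e$. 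The paper's version is marginally quicker (one coloring, no case analysis on the color of $e$), while your version has the feature that it never needs to assume $\deg(v)$ is small---it directly shows every vertex of every $F\in\mathcal{M}(H)$ has degree at least $2\delta(H)-1$. Both arguments are standard; yours is closer in spirit to the original Fox--Lin observation.
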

\begin{proof}
For the lower bound, suppose $s(H)< 2\delta(H)-1$. Consider a graph $F\in M(H)$ with a vertex $v$ of degree $s(H) < 2 \delta(H)-1$. By minimality, there must be some coloring of $F - v$ without a monochromatic copy of $H$. We extend this to a coloring of $F$. To do this, partition $N(v)$ into two sets, $R(v)$ and $B(v)$, so that $|R(v)|\le \delta(H)-1$ and $|B(v)|\le  \delta(H)-1$. For any $x \in R(v)$ color the edge $\{v,x\}$ red and for any $y \in B(v)$ color the edge $\{v,y\}$ blue. In such a coloring, $v$ can never be a part of a monochromatic copy of $H$, since its degree in that copy would be less than $\delta(H)$, a contradiction.

For the upper bound, simply note that by definition there is a graph on $r(H)$ vertices that is Ramsey $H$-minimal. Any vertex in this graph has degree at most $r(H)-1$, yielding the desired bound.
\end{proof}
The lower bound has been shown to be exact for all $3$-connected bipartite graphs \cite{fglps2} and some other classes of bipartite graphs \cite{szz}. However, for many graphs $H$, the upper bound is much larger than $s(H)$; $r(H)$ may be exponentially large in the number of vertices of $H$ \cite{e}, while all known values of $s(H)$ are bounded by a polynomial in the number of vertices of $H$.

\subsection{BEL gadgets}

The following theorem is used for all of the results in the paper, so we state it here. It roughly states that, for any $3$-connected graph $H$, we can find a graph $F$ that, if its edges are $2$-colored in such a way that there is no monochromatic copy of $H$, we can force whatever color pattern we want in a certain region of $F$.

\begin{thm}[\cite{bnr}] \label{BELExtension}
Given any $3$-connected graph $H$, any graph $G$, and any $2$-coloring $\psi$ of $G$ without a monochromatic copy of $H$, there is a graph $F$ with the following properties:
\begin{enumerate}
\item $F\not \to H$,
\item $F$ contains $G$ as an induced subgraph, and
\item for any $2$-coloring of $F$ without a monochromatic copy of $H$, the coloring $G$ agrees with $\psi$, up to permutation of the two colors.
\end{enumerate}
\end{thm}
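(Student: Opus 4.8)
I would deduce this from the existence of \emph{signal senders}, the color-forcing gadgets introduced by Burr, Erd\H os, and Lov\'asz \cite{bel} and constructed for all $3$-connected graphs by Burr, Ne\v{s}et\v{r}il, and R\"odl \cite{bnr}. Say that a \emph{positive signal sender with parameter $N$ for $H$} is a graph $S$ with two distinguished edges $e,f$ such that: (i) $S\not\to H$; (ii) in every $2$-coloring of $S$ with no monochromatic $H$ the edges $e$ and $f$ get the same color; (iii) for each of the two colors there is such a coloring giving $e$ and $f$ that color; and (iv) in $S$ the two endpoints of $e$ lie at distance at least $N$ from the two endpoints of $f$. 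A \emph{negative signal sender} is defined the same way, except that $e$ and $f$ are required to receive \emph{opposite} colors in (ii) and (iii). The key input, which I would cite from \cite{bnr}, is that for every $3$-connected $H$ and every $N$ there exists both a positive and a negative signal sender with parameter $N$; everything else is bookkeeping.

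Write $h=|V(H)|$, so $h\ge 4$ since $H$ is $3$-connected, and fix one of the two colors, say red. To build $F$ I would first add a brand new ``reference'' edge $e_0=ab$ on two vertices $a,b\notin V(G)$, and then, for each edge $e\in E(G)$, glue in a fresh vertex-disjoint copy of a signal sender with parameter $h$ --- positive if $\psi(e)$ is red and negative if $\psi(e)$ is blue --- identifying one of its distinguished edges with $e_0$ and the other with $e$. Call the result $F$. Conclusion $(2)$, that $G$ is an induced subgraph of $F$, is then a short check: each sender copy meets $V(G)\cup\{a,b\}$ only in the four distinct endpoints of $e_0$ and $e$, and because those distinguished edges are at distance at least $h\ge2$ inside the sender no new edge appears among those endpoints, and a short case check shows two different sender copies do not jointly contribute a new edge inside $V(G)$ either. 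Conclusion $(3)$ is immediate from property (ii): for any good coloring $c$ of $F$, restricting $c$ to each sender copy shows that $c(e)=c(e_0)$ exactly when that copy is positive, i.e.\ exactly when $\psi(e)$ is red; hence $c$ restricted to $G$ equals $\psi$ if $c(e_0)$ is red and equals $\psi$ with the colors exchanged otherwise.

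The substance is conclusion $(1)$, that $F\not\to H$. I would take the natural coloring: $\psi$ on $G$, red on $e_0$, and on each sender copy a good coloring of it that is red on $e_0$ and agrees with $\psi$ on $e$; property (iii) guarantees such a coloring exists, because a positive (resp.\ negative) sender was used precisely when $\psi(e)$ agrees with (resp.\ differs from) red. Now suppose this coloring contained a monochromatic copy $H'$ of $H$. One checks that each sender copy $S$ satisfies $F[V(S)]=S$ --- no chord is added, again because the distinguished edges of $S$ are far apart and $S$ touches the rest of $F$ only at their endpoints --- and likewise $F[V(G)]=G$. If $V(H')\subseteq V(G)\cup\{a,b\}$, then since $a$ and $b$ have all their $F$-neighbors outside $V(G)$, the bound $\delta(H')\ge3$ forces $a,b\notin V(H')$, so $H'\subseteq G$ is monochromatic under $\psi$, a contradiction; if $V(H')\subseteq V(S)$ for some sender copy $S$, then $H'\subseteq S$ is monochromatic, contradicting the choice of coloring of $S$. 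In the remaining case $V(H')$ contains a vertex $v$ interior to some copy $S$ (not an endpoint of a distinguished edge of $S$) together with a vertex $w\notin V(S)$; by $3$-connectedness and Menger's theorem there are three internally vertex-disjoint $v$--$w$ paths, each of which must leave $V(S)$ through an endpoint of a distinguished edge of $S$, and these three exit vertices are pairwise distinct. Since there are only four such endpoints, two of them, $p$ and $p'$, belong to $e_0$ and to $e$ respectively; splicing the initial segments of the corresponding two paths produces a $p$--$p'$ walk inside $S$ on at most $h$ vertices, so $\operatorname{dist}_S(p,p')\le h-1$, contradicting parameter $h$. Hence $F\not\to H$.

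The main obstacle is the cited input: constructing positive and negative signal senders for an arbitrary $3$-connected $H$. The rest is an essentially mechanical amalgamation, but producing the senders --- the content of \cite{bnr}, extending the complete-graph case in \cite{bel} --- is delicate: one starts from Ramsey graphs for $H$ with bounded clique number, extracts weak ``agreeing/disagreeing'' gadgets, and then amplifies and composes them, and $3$-connectedness is used there in exactly the way it was used in the last paragraph, to stop copies of $H$ from leaking across the bounded vertex cuts along which the gadgets are glued. A fully self-contained proof would have to carry out that construction.
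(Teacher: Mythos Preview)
The paper does not give its own proof of this theorem; it is stated as a citation from \cite{bnr} and used as a black box throughout. So there is nothing in the paper to compare your argument against directly. That said, your derivation from positive and negative signal senders with large distance parameter is the standard way to obtain BEL gadgets from senders, and the argument you outline is correct: the case analysis for $F\not\to H$ is exhaustive, and the Menger step in the third case is sound (the three exit vertices are internal to the three paths since neither $v$ nor $w$ is a boundary vertex of $S$, hence pairwise distinct, forcing one exit on $e_0$ and one on $e$ and yielding a short $p$--$p'$ walk inside $S$). Your check that $G$ sits induced in $F$ and that $F[V(S)]=S$ is also fine once one notes that each sender meets the rest of $F$ only in the four distinguished endpoints and that the distance condition rules out any $e_0$--$e$ chord.

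It is worth remarking that the paper does carry out essentially your construction later, in Section~\ref{CompleteWithAddedVertex}, when proving from scratch that $H_{t,2}$ has BEL gadgets: Lemmas~\ref{easylemma}--\ref{DistanceTwo} build strong positive signal senders, and Lemma~\ref{BELHt2} glues a fresh copy between each edge of $G_0\cup G_1$ and a reference edge $e_0$ or $e_1$. The differences are cosmetic: the paper uses two reference edges and only positive senders (yielding ``weak'' gadgets, upgraded to strong ones in the following lemma), whereas you use one reference edge together with both positive and negative senders; and the paper's ``no monochromatic $H_{t,2}$'' argument exploits the clique structure of $H_{t,2}$ instead of your general Menger/$3$-connectivity argument. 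Your approach is the cleaner and more general one, at the cost of invoking the existence of negative senders as well.
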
 
We call a graph $F$ with coloring $\psi$ and induced subgraph $G$ constructed in this manner a \textit{BEL gadget}, and if $H$ satisfies the conclusions of the above theorem for all $G$ and $\psi$, we say $H$ has BEL gadgets. In particular, it is shown in \cite{bnr}  that all $3$-connected graphs have BEL gadgets. Note that the acronym BEL stands for Burr, Erd\H{o}s, and Lov\'asz, who first proved the existence of such gadgets for $H=K_t$ \cite{bel}.

\section{The complete graph with an added vertex}\label{CompleteWithAddedVertex}
Recall that $H_{t,d}$ is the graph on $t+1$ vertices that contains a $K_t$ and in which the remaining vertex (not in the $K_t$) has degree $d$, with its neighbors being any $d$ vertices of the $K_t$.

Note $H_{d,d}$ is isomorphic to $K_{d+1}$, for which $s(K_{d+1})$ is known to be $d^2$ \cite{bel}. For $d=1$, it was recently shown that $s(H_{t,1})=t-1$ \cite{fglps}. For $d=0$, it was found $s(H_{t,0})=s(K_t)=(t-1)^2$ \cite{szz}. A natural question that arises is how $s(H_{t,d})$ behaves when $d$ is between $1$ and $t$. We now state the main result of this section.
\begin{thm}\label{Htd}
For all $1< d < t$ we have
\[s(H_{t,d})=d^2.\]
\end{thm}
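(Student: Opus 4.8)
The plan is to prove $s(H_{t,d}) = d^2$ by establishing matching upper and lower bounds, exploiting the structure of $H_{t,d}$ — a clique $K_t$ together with a pendant-like vertex $w$ of degree $d$ — and in particular the fact that $H_{t,d}$ is $3$-connected when $d \geq 3$ (and handling $d = 2$ with a small separate argument, where one needs a substitute for the BEL gadget machinery since $H_{t,2}$ is only $2$-connected). For the lower bound $s(H_{t,d}) \geq d^2$, I would take a Ramsey $H_{t,d}$-minimal graph $F$ with a vertex $v$ of minimum degree $s(H_{t,d})$, and, using minimality, start from a coloring of $F - v$ with no monochromatic $H_{t,d}$; the goal is to extend the coloring to the edges at $v$ without creating a monochromatic copy. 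The key observation is that if $v$ participates in a monochromatic (say red) $H_{t,d}$, then $v$ plays one of two roles: either $v$ is one of the $t$ clique vertices (so it has $t \geq d+1 > d$ red neighbors in the copy and the copy's low-degree vertex $w$ is elsewhere), or $v$ is the low-degree vertex $w$ (so $v$ has exactly $d$ red neighbors forming part of a red $K_t$). So to block $v$ from being in a red copy, I want to partition $N(v)$ into a red part $R(v)$ and a blue part $B(v)$ such that neither part "supports" $v$ in either role.

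The heart of the lower bound is the following: color the edges at $v$ so that $R(v)$ has size $\leq d^2 - 1$ constraints balanced appropriately — more precisely, one wants that in the color class where $v$ has at least $d$ edges, those neighbors of $v$ cannot be extended. Here is the crucial counting idea I would pursue: if $v$ has $d$ red neighbors that lie in a red $K_t$, then those $d$ vertices have a common red neighborhood of size $\geq t - d$ among themselves plus the rest of the clique; the obstruction to build is that each of the $d$ red neighbors of $v$ must have red-degree $\geq t-1$ within the copy. By a careful choice I would try to argue that if $|R(v)| \leq d^2 - 1$, then I can choose which neighbors go red so that no $d$-subset of $R(v)$ has enough common red-neighborhood structure in $F - v$ to complete to a red $K_t$; this should follow from a pigeonhole/extremal argument on the roughly $d$ "groups" one can form, since $d^2 - 1 < d \cdot d$. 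The symmetric statement handles blue. The main obstacle — and where I expect the real work to be — is making this extension argument rigorous: one must rule out both roles of $v$ simultaneously for both colors, and the interaction between "$v$ as clique vertex" (needing $t$ monochromatic neighbors) and "$v$ as the degree-$d$ vertex" (needing only $d$) is what forces the threshold to be exactly $d^2$ rather than something like $2d - 1$ or $t - 1$; I would look to \cite{bel} and \cite{fglps} for the analogous arguments for $H_{t,t} = K_{t+1}$ and $H_{t,1}$ and adapt the common-neighborhood bookkeeping.

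For the upper bound $s(H_{t,d}) \leq d^2$, I would explicitly construct a Ramsey $H_{t,d}$-minimal graph $F$ containing a vertex of degree exactly $d^2$, using BEL gadgets (Theorem \ref{BELExtension}, valid since $H_{t,d}$ is $3$-connected for $d \geq 3$). The idea is to build a core graph around a special vertex $v$ of degree $d^2$, whose $d^2$ neighbors are organized into $d$ blocks of size $d$; one attaches gadgets forcing that in any valid coloring, within each block the edges at $v$ are monochromatic but the blocks are split so that $v$ sees $d$ red blocks' worth in a way that is "almost" a monochromatic copy, and then a further gadget guarantees that completing any of the edges at $v$ the other way produces a monochromatic $H_{t,d}$, while the graph $F$ itself (with the designed coloring) has none — this is the standard "saturation" strategy used in \cite{szz} and \cite{fglps2}. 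Minimality is obtained by taking an $H_{t,d}$-minimal subgraph of this $F$ containing $v$ and checking $v$'s degree is preserved. For $d = 2$, since $H_{t,2}$ is not $3$-connected, I expect to need either an ad hoc gadget construction or a direct argument that $s(H_{t,2}) = 4$; given the paper emphasizes the surprise that $s(H_{t,2}) = 4$, I anticipate this case gets its own short treatment, likely a small explicit graph for the upper bound and a direct degree-counting argument (as in the general lower bound but with $d = 2$, where "$d$ groups" is just $2$ groups) for the lower bound.
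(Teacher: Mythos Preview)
Your overall architecture (extension argument for the lower bound, BEL-gadget construction for the upper bound, separate handling of $d=2$) matches the paper's, but in both halves you are missing the specific idea that makes the argument go through.

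For the lower bound, your framing is off target. You aim to choose $R(v)$ so that no $d$-subset of $R(v)$ can be ``completed to a red $K_t$'' via common neighborhoods in $F-v$; but the coloring of $F-v$ is given, such $d$-subsets may be plentiful, and nothing like ``$d^2-1<d\cdot d$'' shows you can dodge all of them in both colors while also blocking the clique-vertex role. The paper's argument is entirely local to $N(v)$ and rests on one observation you do not make: \emph{every} vertex of $H_{t,d}$ (clique vertex or pendant) has a copy of $K_d$ in its neighborhood, so if $v$ lies in a red copy of $H_{t,d}$ then the red neighborhood of $v$ already contains a red $K_d$. Given that, greedily pack a maximal family $T_1,\dots,T_k$ of vertex-disjoint red $K_d$'s inside $N(v)$; since $|N(v)|<d^2$ one has $k\le d-1$. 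Color the edges from $v$ to $\bigcup T_i$ blue and the remaining edges red. The red neighborhood of $v$ contains no red $K_d$ by maximality of the packing, and the blue neighborhood $\bigcup T_i$ contains no blue $K_d$ because any $d$ of its vertices meet some $T_i$ twice and hence span a red edge. This is precisely the step you flag as ``where I expect the real work to be.''

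For the upper bound, your picture of ``$d$ blocks of size $d$ with gadgets forcing the edges at $v$ to be monochromatic within each block'' is not how it works: $v$ is added \emph{outside} the BEL gadget, so nothing constrains the colors at $v$. Instead the gadget fixes a coloring $\psi$ on a graph $G$ built from $d$ red copies $T_1,\dots,T_d$ of $K_t$ with complete blue bipartite graphs between them, together with, for every transversal $(t_1,\dots,t_d)\in T_1\times\cdots\times T_d$, a blue clique $S_T$ on $t-d$ vertices joined in blue to $\{t_1,\dots,t_d\}$. One checks $\psi$ has no monochromatic $H_{t,d}$. Then $v$ is given $d$ edges to each $T_i$: if all $d$ edges to some $T_i$ are red, $v$ together with $T_i$ is a red $H_{t,d}$; otherwise $v$ has a blue edge to some $t_i$ in each $T_i$, and $v\cup\{t_1,\dots,t_d\}\cup S_{(t_1,\dots,t_d)}$ is a blue $H_{t,d}$. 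Finally, your expectation that $d=2$ ``gets its own short treatment, likely a small explicit graph'' substantially underestimates that case: since $H_{t,2}$ is only $2$-connected, Theorem~\ref{BELExtension} does not apply, and the paper devotes several lemmas (robust Ramsey graphs for $H_{t-1,1}$, strong positive signal senders, weak-to-strong BEL) to establishing BEL gadgets for $H_{t,2}$ before the same construction can be run.
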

The proof of this theorem is presented in two parts. In the first part, we prove that $s(H_{t,d})\ge d^2$ for all values of $d$. The second part expands on the ideas in \cite{bel} and \cite{fglps} and deals with the upper bound on $s(H_{t,d})$ for $d\ge 2$: we construct a graph $G$ with a vertex $v$ of degree $d^2$ that is Ramsey for $H_{t,d}$ such that $G-v\not \to H_{t,d}$. It follows from this that $s(H_{t,d})\le d^2$, and so we obtain $s(H_{t,d})=d^2$ for all $1< d < t$. We now begin with the first part of our proof, which closely follows the ideas of \cite{bel}.
\begin{lem}
Let $H$ be a graph such that, for all $v \in V(H)$, the neighborhood of $v$ contains a copy of $K_d$. Then
$s(H)\ge d^2$.
\end{lem}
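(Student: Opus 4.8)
The plan is to mimic the Burr–Erdős–Lovász lower bound argument for $s(K_t) = (t-1)^2$, but exploiting only that every neighborhood contains a $K_d$ rather than a $K_{t-1}$. Suppose for contradiction that $F \in \mathcal{M}(H)$ has a vertex $v$ with $\deg_F(v) = s(H) < d^2$. By minimality of $F$, there is a $2$-coloring $\chi$ of $F - v$ with no monochromatic copy of $H$; I want to extend $\chi$ to all of $F$ without creating a monochromatic $H$, contradicting $F \to H$. The only danger is a monochromatic copy of $H$ using $v$, say a red one (WLOG): then $v$ would have $d$ red neighbors $w_1, \dots, w_d$ in that copy, and since $v$'s neighborhood in $H$ is contained in a vertex whose neighborhood contains $K_d$, these $w_i$'s would themselves need to be red-adjacent to a $K_d$ worth of common vertices — in particular the $w_i$ would need red-degree at least $d$ in $F$ among vertices that form suitable cliques.

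First I would set up the edge-coloring of the $\deg_F(v)$ edges at $v$ as a $2$-coloring problem: color edge $\{v,w\}$ red or blue so that for \emph{no} choice of colors can a monochromatic $H$ be completed. The key structural observation is that if a monochromatic red $H$ contains $v$, then $v$'s $d$ red neighbors in that copy lie in the neighborhood of some vertex $u$ of $H$ with $u$'s neighborhood containing a $K_d$, so in $F$ there is a red clique (or near-clique) configuration forcing each of these $d$ vertices to have red-degree $\ge d$ within a common structure. The crucial counting point, exactly as in \cite{bel}: pick one neighbor $w_0$ of $v$; if we color $\{v, w_0\}$ red, then to extend to a red $H$ through $v$ we need $d-1$ further red neighbors of $v$ that, together with $w_0$, sit inside a $K_d$ in some vertex's neighborhood — this constrains them to lie in $N_F^{\mathrm{red}}(w_0)$, a set of size... and here the bound $d^2$ arises because we are choosing, roughly, $d$ "slots" each needing $d$ red partners.

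Concretely, the approach: orient the extension greedily. Process the neighbors of $v$; maintain that at every stage the red neighbors assigned so far cannot be completed to a red $K_d$-in-a-neighborhood, and likewise for blue. Since a red $H$ through $v$ needs $d$ red neighbors of $v$ forming a $K_d$ inside $N(u)$ for some $u$, and forming a $K_d$ requires each such neighbor to be red-adjacent (in $F-v$, under the fixed coloring $\chi$) to the other $d-1$, the relevant quantity is: how large can a red "$K_d$-rich" neighborhood of $v$ be forced to be. The $d^2$ threshold should fall out of: $d$ choices for which vertex of $v$'s red neighborhood plays which role, times $d$ — i.e., partition $N_F(v)$ so that neither color class contains $d$ vertices that are pairwise red- (resp. blue-) adjacent and jointly dominated appropriately; a class of size $< d^2$ can always be so colored, essentially by a Turán/pigeonhole argument on the auxiliary graph whose edges are the monochromatic adjacencies among $N_F(v)$ under $\chi$, restricted to $K_d$'s.

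The main obstacle I anticipate is making precise the claim that a monochromatic copy of $H$ through $v$ forces a \emph{genuine monochromatic $K_d$} among $v$'s neighbors in $F$ under the \emph{already-fixed} coloring $\chi$ of $F-v$ — in particular, that the $d$ neighbors of $v$ in the copy, together with their required common clique-neighborhood, all lie outside $v$ and hence are colored by $\chi$. Once that is pinned down, the counting is the clean part: if $N_F(v)$ has fewer than $d^2$ vertices, then in each of the two "monochromatic-$K_d$ hypergraphs" on $N_F(v)$ one can avoid a transversal, and a suitable $2$-coloring of the edges at $v$ exists — so the extension goes through and $F \not\to H$, the desired contradiction. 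I would also double-check the edge case where a copy of $H$ through $v$ might use $v$ in the role of one of the clique vertices rather than the degree-$d$ vertex, but since every vertex of $H$ has a neighborhood containing $K_d$, the same bound applies uniformly.
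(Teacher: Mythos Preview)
Your framework is correct: fix a coloring $\chi$ of $F-v$ with no monochromatic $H$, observe that any monochromatic copy of $H$ through $v$ forces a monochromatic $K_d$ (under $\chi$) inside $v$'s same-color neighborhood, and then try to color the edges at $v$ to avoid this. The structural observation is the right one, and your worry about ``jointly dominated'' configurations is a red herring --- all that matters is that the red neighborhood of $v$ contain no red $K_d$ and the blue neighborhood contain no blue $K_d$, both with respect to the already-fixed $\chi$.

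But the actual construction of the extension is where the content lies, and it is missing from your sketch. Neither a greedy assignment nor a vague ``Tur\'an/pigeonhole'' count is an argument here: a greedy processing of $N_F(v)$ can get stuck (some vertex $w$ might complete a red $K_d$ if placed in the red part and a blue $K_d$ if placed in the blue part), and there is no evident Tur\'an-type bound that directly produces the required partition. The paper supplies exactly the missing idea: take a \emph{maximal collection $T_1,\dots,T_k$ of vertex-disjoint red copies of $K_d$} inside $N_F(v)$ under $\chi$; color all edges from $v$ to $\bigcup_i T_i$ blue and the remaining edges at $v$ red. Since $|N_F(v)|<d^2$ one has $k\le d-1$. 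Maximality guarantees every red $K_d$ in $N_F(v)$ meets some $T_i$, so the red neighborhood of $v$ contains no red $K_d$; and the blue neighborhood is a union of at most $d-1$ red cliques, so any $d$ of its vertices share a red edge by pigeonhole, hence it contains no blue $K_d$. This packing trick is the crux of the proof, and your proposal does not reach it.
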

\begin{proof}
Suppose there exists $F \in \mathcal{M}(H)$ and some $v \in V(F)$ with $\operatorname{deg}{(v)}<d^2$. Since $F$ is minimal, we can $2$-color the edges of $F-v$ so that there is no monochromatic copy of $H$. Consider any such $2$-coloring of $F-v$. In this coloring, let $S$ denote the neighborhood of $v$ and let $T_1, \dots ,T_k$ be a maximal set of vertex-disjoint red copies of $K_{d}$ in $S$. Since $\operatorname{deg}{(v)}<d^2$, we must have $|S|<d^2$, and so $k\le d-1$. Now we color all the edges connecting $v$ to $T_1,\dots ,T_k$ blue, and all other edges incident to $v$ red. We claim that no monochromatic copy of $H$ arises in such a coloring. Note that such a copy would need to use $v$. We will now show that there is no red $d$-clique in the red neighborhood of $v$ and that there is no blue $d$-clique in the blue neighborhood of $v$, thus showing that $v$ cannot be contained in any monochromatic copy of $H$.

Any red $d$-clique in $S$ must intersect one of $T_1,\ldots,T_k$ and therefore would have a blue edge from $v$. On the other hand, suppose there exists a blue $d$-clique in the blue neighborhood of $v$, which is precisely $T_1 \cup \cdots \cup T_k$. Since $k\le d-1$, by the pigeonhole principle, at least two vertices of this blue $d$-clique must be contained in the same $T_i$. These two vertices, however, are connected by a red edge, a contradiction. It follows that such an $F \in \mathcal{M}(H)$ cannot exist, and hence $s(H) \ge d^2$. 
\end{proof}
Since the neighborhood of each vertex in $H_{t,d}$ contains a copy of $K_d$, we have the following corollary.
\begin{cor}\label{LowerBoundHtd}
For all values of $d$ we have $s(H_{t,d})\ge d^2$.
\end{cor}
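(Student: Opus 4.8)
The plan is to obtain the corollary directly from the preceding lemma: it suffices to verify that $H_{t,d}$ satisfies the hypothesis that the neighborhood of every vertex contains a copy of $K_d$. To set up notation I would write $V(H_{t,d}) = \{w_1,\dots,w_t,u\}$, where $\{w_1,\dots,w_t\}$ induces a $K_t$ and $u$ is adjacent to exactly $d$ of the $w_i$, say $w_1,\dots,w_d$.

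Then I would argue vertex by vertex. For the vertex $u$, its neighborhood is $\{w_1,\dots,w_d\}$, which lies inside the clique on $\{w_1,\dots,w_t\}$ and hence already induces a $K_d$. For any clique vertex $w_i$, its neighborhood contains all of $\{w_1,\dots,w_t\}\setminus\{w_i\}$, a set of $t-1$ pairwise adjacent vertices; when $d \le t-1$ this already contains a copy of $K_d$, and in the remaining case $d = t$ the graph $H_{t,t}$ is just $K_{t+1}$, so every neighborhood is a $K_t \supseteq K_d$. In every case the neighborhood contains a $K_d$, so the lemma applies and yields $s(H_{t,d}) \ge d^2$.

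Since the argument is nothing more than checking the hypothesis of an already-established lemma, I do not expect any genuine obstacle. The only mild care needed is in the degenerate ranges $d \in \{0,1\}$ (where the claimed bound is vacuous or trivial, but still correct) and $d = t$, so that the blanket phrase "for all values of $d$" is literally true; both are covered by the observations above.
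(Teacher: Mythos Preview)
Your proposal is correct and follows exactly the paper's approach: the paper simply notes that every vertex of $H_{t,d}$ has a $K_d$ in its neighborhood and invokes the preceding lemma. You have merely spelled out that verification in more detail, including the harmless boundary cases $d\in\{0,1,t\}$.
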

This completes the first part of our proof, establishing a lower bound on the value of $s(H_{t,d})$.

For the upper bound, we wish to construct an $H$-minimal graph with vertex of degree exactly $d^2$ for $d \ge 2$. To that end, we wish to show that $H_{t,d}$ has BEL gadgets. Theorem \ref{BELExtension} implies this in the case $d \geq 3$, but not when $d = 2$; the majority of the work in this section is proving that $H_{t,2}$ has BEL gadgets.

\begin{thm}\label{BELHtd}
For all $2 \leq d \leq t$, the graph $H_{t,d}$ has BEL gadgets.
\end{thm}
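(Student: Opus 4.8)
The plan is to split on the connectivity of $H_{t,d}$. For $3\le d\le t$ the graph $H_{t,d}$ is $3$-connected: it has at least $t+1\ge 5$ vertices; removing the added vertex $w$ leaves $K_t$, removing one clique vertex leaves a $K_{t-1}$ to which $w$ is still attached, and removing two clique vertices leaves a $K_{t-2}$ joined to $w$ through the at least $d-2\ge 1$ surviving neighbours of $w$. Hence Theorem~\ref{BELExtension} applies verbatim and $H_{t,d}$ has BEL gadgets. All of the remaining work is the case $d=2$, where $\{u_1,u_2\}$ is a $2$-cut of $H_{t,2}$ and Theorem~\ref{BELExtension} does \emph{not} apply.

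For $d=2$ I would first reduce the construction of BEL gadgets to the construction of \emph{signal senders}. Recall that a positive (resp.\ negative) signal sender for a graph $H$ is a graph $S$ with two distinguished signal edges such that $S\not\to H$ and in every $2$-coloring of $S$ with no monochromatic $H$ the two signal edges receive equal (resp.\ distinct) colors. The usual amalgamation argument shows that if $H$ is $2$-connected and possesses both types of signal senders in which the two signal edges can be placed at arbitrarily large distance, then $H$ has BEL gadgets for every pair $(G,\psi)$: fix a reference edge $e_0$ of $G$ and, for each other edge $e$ of $G$, glue a positive or a negative sender between $e$ and $e_0$ according to whether $\psi(e)=\psi(e_0)$; choosing the senders long enough so that, together with the $2$-connectedness of $H$, no monochromatic copy of $H$ in the amalgam can meet two of the pieces, one gets a graph that is $H$-free and in which every $H$-free coloring restricts to $\psi$ or its color-swap on $G$. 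So it suffices to build signal senders for $H_{t,2}$.

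To build these I would exploit the containment $K_t\subseteq H_{t,2}\subseteq K_{t+1}$, which yields the sandwich
\[
\{K_t\text{-free colorings}\}\ \subseteq\ \{H_{t,2}\text{-free colorings}\}\ \subseteq\ \{K_{t+1}\text{-free colorings}\}.
\]
The left containment shows that any signal sender for $K_t$ (these exist for every $t\ge 3$ by \cite{bel}) is automatically $H_{t,2}$-Ramsey-free, since its canonical $K_t$-free coloring has no monochromatic $K_t$ and hence no monochromatic $H_{t,2}$; the right containment shows that the forcing property of a signal sender for $K_{t+1}$ already holds against every $H_{t,2}$-free coloring. Neither object is by itself an $H_{t,2}$-sender, so the construction has to interpolate: starting from a $K_t$-sender, I would attach extra gadgetry to every edge that could lie in a monochromatic $K_t$, engineered so that in any $H_{t,2}$-free coloring of the enlarged graph no monochromatic $K_t$ can survive (a monochromatic $K_t$ together with the attached gadgetry would be forced to carry a monochromatic $H_{t,2}$), which pushes us back into the regime of $K_t$-free colorings where the base sender already does the forcing; and one checks that this augmentation destroys neither $H_{t,2}$-Ramsey-freeness nor the ability to separate the two signal edges, using that a $2$-colored $K_{t+1}$ need not contain a monochromatic $H_{t,2}$ (already $K_4$ admits a coloring with no monochromatic triangle) so the new cliques can be colored locally.

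The main obstacle is precisely this interpolation. The two demands pull against each other --- forcing wants many near-copies of $H_{t,2}$ present, Ramsey-freeness wants few --- and the step ``attach gadgetry that kills monochromatic $K_t$'s'' is itself a color-forcing task, so it cannot be outsourced to a black box and the whole sender has to be assembled in one carefully bookkept construction. Equivalently, wherever \cite{bnr} uses $3$-connectedness to confine a monochromatic copy of $H$ to a single piece, one must substitute an ad hoc argument that controls how the $2$-cut $\{u_1,u_2\}$ of $H_{t,2}$ can be used to stitch a monochromatic copy across pieces, both inside the sender and in the final amalgamation. I also expect the boundary case $t=3$ --- where $H_{3,2}$ is the diamond $K_4-e$ and attaching a degree-$2$ vertex to any edge creates a new triangle --- to need separate treatment.
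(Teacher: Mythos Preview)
Your treatment of $d\ge 3$ is correct and matches the paper: $H_{t,d}$ is $3$-connected for $d\ge 3$, so Theorem~\ref{BELExtension} applies directly.

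For $d=2$, however, what you have written is a strategy outline rather than a proof, and the key step is missing. You correctly reduce to building signal senders for $H_{t,2}$ and correctly observe the sandwich $K_t\subseteq H_{t,2}\subseteq K_{t+1}$, but the ``interpolation'' you describe --- starting from a $K_t$-sender and attaching gadgetry to kill any monochromatic $K_t$ that might appear in an $H_{t,2}$-free coloring --- is not actually carried out, and you yourself flag it as the main obstacle. Killing a monochromatic $K_t$ is itself a color-forcing task for $H_{t,2}$, so the proposal is circular unless you produce a concrete construction; as written there is no sender. The amalgamation step (senders $\Rightarrow$ BEL gadgets) is fine, but it rests on an object you have not built.

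The paper's route for $d=2$ is genuinely different and avoids the interpolation entirely. Instead of bootstrapping from $K_t$- or $K_{t+1}$-senders, it uses the robust Ramsey property of Lemma~\ref{EpsilonArrow}: take $K_t$-free graphs $F_1,\ldots,F_{t-2}$ with $F_i\overset{\epsilon}{\rightarrow}H_{t-1,1}$ and form $G_0=F_1\boxtimes\cdots\boxtimes F_{t-2}\boxtimes R_0$ for an arbitrary $H_{t,2}$-free graph $R_0$. A color-pattern pigeonhole argument then shows that any $H_{t,2}$-free coloring of $G_0$ forces each $F_i$ to contain a red $H_{t-1,1}$ with all-blue crossing edges, which in turn forces $R_0$ to be monochromatic (Lemma~\ref{GadgetGraph}). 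Taking $R_0$ to be two disjoint edges yields a positive signal sender directly. The paper never constructs a negative sender; instead it passes through ``weak BEL gadgets'' (each color class of $\psi$ is forced to be monochromatic, but the two classes are not yet forced to be distinct) and then uses a short connectedness trick to upgrade weak to strong. This product-with-$\epsilon$-arrow construction is the missing idea in your proposal; it handles all $t\ge 3$ uniformly and does not require a separate argument for $t=3$.
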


We postpone the proof of this theorem to the end of the section; let us first see why it implies the desired upper bound on $s(H_{t,d})$.

\begin{lem} \label{Htd3}
For all $2 \leq d \leq t$ there exists a graph $F'$ with vertex $v$ of degree $d^2$ so that $F'\to H_{t,d}$ but $F'-v \not \to H_{t,d}$.
\end{lem}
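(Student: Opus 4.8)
The plan is to build $F'$ from a BEL gadget together with a carefully chosen ``core'' graph attached at the vertex $v$. Since Theorem~\ref{BELHtd} guarantees that $H_{t,d}$ has BEL gadgets, we are free to prescribe any monochromatic-$H_{t,d}$-free coloring on an induced subgraph $G$ and extend it to a graph $F$ with $F \not\to H_{t,d}$ in which that coloring is forced (up to swapping colors). So the real task is a local, finite one: exhibit a gadget-ready configuration around a single vertex $v$ of degree exactly $d^2$ which (i) admits a coloring with no monochromatic $H_{t,d}$ using only the edges away from $v$, so that $F' - v \not\to H_{t,d}$ after the BEL machinery is invoked, yet (ii) forces a monochromatic $H_{t,d}$ once the edges at $v$ are added back in every coloring, giving $F' \to H_{t,d}$.

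First I would describe the core. Let $v$ have $d^2$ neighbors, and think of them (following \cite{bel} and \cite{fglps}) as a $d \times d$ array, or more precisely as $d$ groups $W_1, \dots, W_d$ of size $d$ each. The point of this grouping is the pigeonhole phenomenon already used in the lower-bound lemma: in any $2$-coloring of the edges from $v$ to its neighborhood, one color class restricted to some $W_i$ must contain at most $d-1$ ``parts,'' forcing, by a Ramsey-type argument inside $W_i$, a monochromatic $K_d$ in that color among $v$'s neighbors in a single color. I would attach to each $W_i$ (and to appropriate unions of the $W_i$) additional vertices and BEL-forced colorings so that whenever $v$ sees a monochromatic $K_d$ in, say, red among its red-neighbors, those $d$ vertices together with $v$ extend to a red $H_{t,d}$ — i.e., there is a forced red $K_t$ sitting on a superset of those $d$ vertices, with $v$ playing the role of the degree-$d$ apex. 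Concretely this means planting, for every $d$-subset of $v$'s neighbors that could arise as a monochromatic clique, a BEL-forced monochromatic $K_{t-d}$ completing it to $K_t$; the BEL gadget lets us impose all of these simultaneously in a fixed color pattern that is itself $H_{t,d}$-free (each planted $K_t$ is only a $K_t$, not an $H_{t,d}$, as long as no outside vertex is joined to $d$ of its vertices in the same color — which we arrange by keeping the planted cliques ``isolated'' in the relevant color).

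Next I would verify the two required properties. For $F' - v \not\to H_{t,d}$: by construction the coloring $\psi$ we feed into the BEL gadget has no monochromatic $H_{t,d}$, and Theorem~\ref{BELExtension} / the definition of BEL gadgets guarantees the resulting $F$ (before adding $v$) has $F \not\to H_{t,d}$; taking $F' - v = F$, we are done. For $F' \to H_{t,d}$: take any $2$-coloring of $F'$. Restricting to $F' - v$, the BEL property forces the core coloring to agree with $\psi$ up to color swap, so all the planted near-cliques are present in their prescribed colors. Now look at the $d^2$ edges at $v$: by the pigeonhole argument above, some color, say red, has the property that $v$'s red-neighbors contain a red $K_d$ sitting in one of the planted configurations; combined with the forced red $K_{t-d}$ completing it, and the red edges from $v$ to those $d$ vertices, we get a red $H_{t,d}$. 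Hence $F' \to H_{t,d}$, and since $\deg_{F'}(v) = d^2$, this yields $s(H_{t,d}) \le d^2$.

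The main obstacle I expect is bookkeeping the BEL-forced coloring so that it is globally $H_{t,d}$-free while still ``completing'' every possible monochromatic $K_d$ at $v$ to a full $K_t$ in that color. One must ensure that the many planted $K_t$'s (one for essentially every $d$-subset of $v$'s neighbors that can appear as a monochromatic clique) do not accidentally create an $H_{t,d}$ among themselves — i.e., no vertex outside a given planted $K_t$ is joined in the same color to $d$ of its vertices — and that the forced array structure on $N(v)$ genuinely forces a monochromatic $K_d$ rather than merely making it possible. This is precisely the kind of gadget-coloring argument carried out in \cite{bel} and \cite{fglps} for $H_{t,t}$ and $H_{t,1}$; adapting it to intermediate $d$, while keeping the apex degree pinned at exactly $d^2$, is the crux. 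I would handle it by choosing $\psi$ to color every planted clique in a color ``private'' to that gadget-branch and checking the no-monochromatic-$H_{t,d}$ condition branch by branch, exactly as the detailed construction following this lemma presumably does.
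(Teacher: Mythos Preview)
Your high-level framework is right and matches the paper: use Theorem~\ref{BELHtd} to force a specific $H_{t,d}$-free coloring $\psi$ on an induced subgraph $G$, then attach a new vertex $v$ of degree $d^2$ to $G$ so that any extension of the (forced) coloring $\psi$ to the edges at $v$ produces a monochromatic $H_{t,d}$. The organization of $N(v)$ into $d$ blocks of size $d$ is also the right move. But the combinatorial core --- the actual choice of $\psi$ and the argument that every coloring at $v$ yields a monochromatic $H_{t,d}$ --- is not supplied, and your sketch of it (``one color class restricted to some $W_i$ must contain at most $d-1$ `parts,' forcing, by a Ramsey-type argument inside $W_i$, a monochromatic $K_d$'') does not correspond to a valid argument. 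There is no Ramsey phenomenon inside a single $W_i$ of size $d$; whether $v$'s red-neighbors contain a red $K_d$ depends entirely on the forced coloring $\psi$, which you never specify. You correctly flag this as ``the crux'' and ``the main obstacle,'' but deferring to analogy with \cite{bel} and \cite{fglps} is not a proof.

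Here is what the paper actually does, and it is worth internalizing because the dichotomy is cleaner than a symmetric ``plant a completion for every possible $K_d$'' scheme. Take $d$ vertex-disjoint \emph{red} copies $T_1,\dots,T_d$ of $K_t$, with complete \emph{blue} bipartite graphs between every pair $T_i,T_j$; for each transversal $T=(t_1,\dots,t_d)\in T_1\times\cdots\times T_d$ add a blue clique $S_T$ of size $t-d$ joined completely in blue to $\{t_1,\dots,t_d\}$. One checks this $\psi$ has no monochromatic $H_{t,d}$. Now attach $v$ to $d$ vertices in each $T_i$. The dichotomy is: either $v$ has $d$ red edges into some $T_i$, giving a red $H_{t,d}$ on $T_i\cup\{v\}$; or $v$ has at least one blue edge to each $T_i$, and those $d$ blue neighbors together with the corresponding $S_T$ form a blue $K_t$ to which $v$ is blue-adjacent in $d$ places, giving a blue $H_{t,d}$. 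No pigeonhole beyond this simple case split is needed, and the asymmetry (red handled by the $T_i$, blue handled by the $S_T$) is exactly what keeps $\psi$ itself $H_{t,d}$-free.
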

\begin{proof}
If $d=t$ then $s(H_{t,d})=d^2$ by \cite{bel}, which immediately implies the lemma; we will henceforth assume $d < t$.

The graph $H_{t,d}$ has BEL gadgets by Theorem \ref{BELHtd}. This means that, for any graph $G$ and $2$-coloring $\psi$ of $G$ without a monochromatic copy of $H$, there exists a graph $F\not \to H_{t,d}$ with an induced copy of $G$ such that every $2$-coloring of $F$ without a monochromatic copy of $H_{t,d}$ agrees with $\psi$ on the copy of $G$, up to permutation of colors. We describe our graph $G$ together with its coloring $\psi$ for our BEL gadget as follows: 
\begin{enumerate}
\item $G$ contains $d$ disjoint red copies $T_1,\dots ,T_{d}$ of $K_{t}$,
\item For each distinct pair $i$ and $j$, there is a complete blue bipartite graph between $T_i$ and $T_j$, and
\item For each way there is to choose a $d$-tuple $T = (t_1, \ldots, t_d) \in T_1 \times \cdots \times T_d$ by taking one vertex from each $T_i$, we add a set of $t-d$ vertices $S_T=\{v_1^T, \ldots ,v_{t-d}^T\}$; we add blue edges between all pairs of vertices in $S_T$ so that $S_T$ becomes a blue clique, and add more blue edges so that there is a complete blue bipartite graph between $S_T$ and $T$. For distinct $d$-tuples $T$ and $T'$, $S_T$ and $S_{T'}$ are disjoint.
\end{enumerate}
An example of this $G$ with coloring $\psi$ is shown in Figure \ref{fig1}. We first claim that this coloring $\psi$ contains no monochromatic copy of $H_{t,d}$. The connected components in red are all copies of $K_t$, so there is no red copy of $H_{t,d}$. We also claim there is no blue copy of $H_{t,d}$. If we omit the vertices that are contained in the various $S_T$, the blue graph is $d$-partite and so contains no $K_t$, as $d < t$. Therefore, any blue copy of $H_{t,d}$ must use some vertex $w$ in some $S_T$ as part of a blue $K_t$. Note that the blue degree of $w$ is $t-1$, and therefore this blue $K_t$ must consist precisely of $w$ and its neighborhood. However, any vertex that is not $w$ or contained in the blue neighborhood of $w$ has degree at most $d-1$ to the neighborhood of $w$ by construction, and so cannot be the vertex of degree $d$ in $H_{t, d}$. Therefore, there is no blue copy of $H_{t,d}$.

Consider a graph $F\not \to H_{t,d}$ with an induced copy of $G$ such that any $2$-coloring of $F$ without a monochromatic copy of $H_{t,d}$ restricts to the coloring $\psi$ on the induced copy of $G$, up to permutation of the colors; this exists by Theorem \ref{BELHtd}. We now modify $F$ to $F'$ by adding a vertex $v$, and adding $d$ edges from $v$ to each $T_i$ in the induced copy of $G$. The vertex $v$ clearly has degree $d^2$. We claim that this modified graph $F'$ is Ramsey for $H_{t,d}$. Consider any $2$-coloring of $F'$. In this $2$-coloring, if there is a monochromatic copy of $H_{t, d}$ in the subgraph $F = F' - v$, then we are done. Otherwise suppose the $2$-coloring does not yield a monochromatic copy of $H_{t,d}$ in $F$. Then the induced graph $G$ must have coloring $\psi$, up to permutation of colors. Let us assume without loss of generality that each $T_i$ forms a red clique and the remaining edges are blue.

If $v$ had red degree $d$ to some $T_i$, then $v$ together with $T_i$ would be a red copy of $H_{t,d}$. Thus, at least one edge from $v$ to each copy of $T_i$ must be colored blue. Choose one vertex $t_i$ from each $T_i$ so that $v$ has a blue edge to $t_i$ and take $T = (t_1,\ldots,t_d)$. Then these vertices $t_i$ together with $S_T$ forms a blue $K_t$, and adding $v$ creates a blue $H_{t,d}$.
\end{proof}

This immediately gives the desired upper bound on $s(H_{t,d})$.

\begin{cor}
For every $2 \leq d \leq t$, we have $s(H_{t,d}) \leq d^2$.
\end{cor}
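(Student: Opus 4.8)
The plan is to deduce this directly from Lemma \ref{Htd3} together with the definition of $s(H_{t,d})$. By Lemma \ref{Htd3}, for each $2 \leq d \leq t$ there is a graph $F'$ with a vertex $v$ of degree $d^2$ such that $F' \to H_{t,d}$ but $F' - v \not\to H_{t,d}$. Since $F'$ is Ramsey for $H_{t,d}$, the family of subgraphs of $F'$ that are Ramsey for $H_{t,d}$ is nonempty and finite, so it has a minimal member $F''$ under the subgraph relation. Because every proper subgraph of $F''$ is also a proper subgraph of $F'$ and hence not Ramsey for $H_{t,d}$, the graph $F''$ is in fact Ramsey $H_{t,d}$-minimal, i.e.\ $F'' \in \mathcal{M}(H_{t,d})$.

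Next I would check that $v \in V(F'')$. If it were not, then $F''$ would be a subgraph of $F' - v$, and since $F'' \to H_{t,d}$ this would give $F' - v \to H_{t,d}$, contradicting the choice of $F'$. Hence $v \in V(F'')$, and the degree of $v$ in $F''$ is at most its degree in $F'$, namely $d^2$. Therefore $\delta(F'') \leq \operatorname{deg}_{F''}(v) \leq d^2$, and since $F'' \in \mathcal{M}(H_{t,d})$ we conclude that $s(H_{t,d}) = \min_{F \in \mathcal{M}(H_{t,d})} \delta(F) \leq d^2$.

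There is essentially no obstacle in this step: all of the substantive work sits in Lemma \ref{Htd3} (and, beneath it, Theorem \ref{BELHtd}), and the corollary is merely the bookkeeping that passes from ``a Ramsey graph that stops being Ramsey once a fixed low-degree vertex is deleted'' to ``a Ramsey-minimal graph containing a low-degree vertex.'' Combined with Corollary \ref{LowerBoundHtd}, this also completes the proof of Theorem \ref{Htd}.
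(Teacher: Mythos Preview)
Your proof is correct and follows essentially the same approach as the paper: invoke Lemma \ref{Htd3} to obtain $F'$ with the distinguished vertex $v$, pass to a Ramsey-minimal subgraph $F''$, note that $F''$ must contain $v$ since $F'-v\not\to H_{t,d}$, and conclude $s(H_{t,d})\le\delta(F'')\le\operatorname{deg}_{F''}(v)\le d^2$. The paper's proof is the same argument stated more tersely.
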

\begin{proof}
By the previous lemma, there is a graph $F'$ with a vertex $v$ of degree $d^2$ which is Ramsey for $H_{t,d}$ so that $F' - v$ is not Ramsey for $H_{t,d}$. Take $F''$ to be a subgraph of $F'$ which is minimal subject to the constraint that $F''$ is Ramsey for $F$. $F''$ must contain $v$, and so $s(H_{t,d}) \leq\delta(F'') \leq d^2$, as desired.
\end{proof}

\begin{figure}[ht]
\begin{center}
\begin{tikzpicture}

\draw[blue,dashed,very thick] (-4,0.5) -- (4,0.5);
\draw[blue,dashed,very thick] (-4,0.8) -- (4,0.8);
\draw[blue,dashed,very thick] (-4,1.1) -- (4,1.1);

\draw[blue,dashed,very thick] (-4,0.5) to[out=-60,in=-120] (4,0.5);
\draw[blue,dashed,very thick] (-4,0.8) to[out=-60,in=-120] (4,0.8);
\draw[blue,dashed,very thick] (-4,1.1) to[out=-60,in=-120] (4,1.1);

\tikzstyle{every node}=[draw,circle,fill=white,minimum size=75pt,
                            inner sep=0pt]
														
		\draw (-4,0.85) node (A) {};
    \draw (0,0.85) node (B) {};
    \draw (4,0.85) node (C) {};

\tikzstyle{every node}=[draw,circle,fill=black,minimum size=4pt,
                            inner sep=0pt]
    \draw (-0.5,0) node (B4) {};
    \draw (0.5,0) node (B3) {};
    \draw (-1,1.1) node (B5) [label=below:$v_y$]{};
    \draw (1,1.1) node (B2) {};
    \draw (0,1.75) node (B1) {};
    
		\draw (-4.5,0) node (A4) {};
    \draw (-3.5,0) node (A3) {};
    \draw (-5,1.1) node (A5) {};
    \draw (-3,1.1) node (A2) [label=above:$v_x$]{};
    \draw (-4,1.75) node (A1) {};
    
		\draw (3.5,0) node (C4) {};
    \draw (4.5,0) node (C3) {};
    \draw (3,1.1) node (C5) [label=below:$v_z$]{};
    \draw (5,1.1) node (C2) {};
    \draw (4,1.75) node (C1) {};

		\draw (-1, 3.5) node (Y) [label=left:$S_T$]{};
		\draw (1, 3.5) node (Z) {};

		\draw[thick,red] (A1) -- (A2);
		\draw[thick,red] (A1) -- (A3);
		\draw[thick,red] (A1) -- (A4);
		\draw[thick,red] (A1) -- (A5);
		\draw[thick,red] (A2) -- (A3);
		\draw[thick,red] (A2) -- (A4);
		\draw[thick,red] (A2) -- (A5);
		\draw[thick,red] (A3) -- (A4);
		\draw[thick,red] (A3) -- (A5);
		\draw[thick,red] (A4) -- (A5);

		\draw[thick,red] (B1) -- (B2);
		\draw[thick,red] (B1) -- (B3);
		\draw[thick,red] (B1) -- (B4);
		\draw[thick,red] (B1) -- (B5);
		\draw[thick,red] (B2) -- (B3);
		\draw[thick,red] (B2) -- (B4);
		\draw[thick,red] (B2) -- (B5);
		\draw[thick,red] (B3) -- (B4);
		\draw[thick,red] (B3) -- (B5);
		\draw[thick,red] (B4) -- (B5);

		\draw[thick,red] (C1) -- (C2);
		\draw[thick,red] (C1) -- (C3);
		\draw[thick,red] (C1) -- (C4);
		\draw[thick,red] (C1) -- (C5);
		\draw[thick,red] (C2) -- (C3);
		\draw[thick,red] (C2) -- (C4);
		\draw[thick,red] (C2) -- (C5);
		\draw[thick,red] (C3) -- (C4);
		\draw[thick,red] (C3) -- (C5);
		\draw[thick,red] (C4) -- (C5);

		\draw[thick,blue] (A2) -- (Y);
		\draw[thick,blue] (A2) -- (Z);
		\draw[thick,blue] (B5) -- (Y);
		\draw[thick,blue] (B5) -- (Z);
		\draw[thick,blue] (C5) -- (Y);
		\draw[thick,blue] (C5) -- (Z);
		\draw[thick,blue] (Y) -- (Z);

\end{tikzpicture}
\caption{Example of $G$ with the coloring $\psi$ for $t=5$ and $d=3$. Here, only one set $S_T$ is shown, corresponding to the triple $T = (v_x, v_y, v_z)$. The dashed blue edges represent complete blue bipartite graphs. When we add the external vertex $v$, we will connect it to three vertices from each copy of $K_5$, making its degree $d^2 = 9$.}
\label{fig1}
\end{center}
\end{figure}
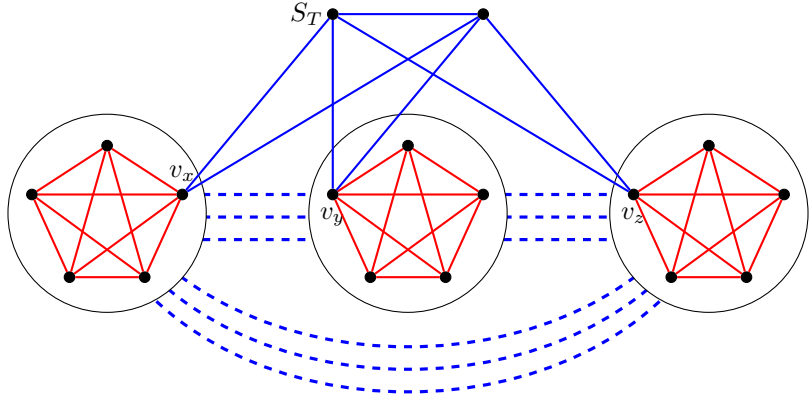

We now prove that $H_{t,2}$ has BEL gadgets. Note that, for $t=2$, the graph $H_{2,2}$ is isomorphic to $K_3$, for which it is known that BEL gadgets exist \cite{bel}. Henceforth, we will assume that $t\ge 3$. The ideas behind the proof of BEL gadgets for $H_{t,2}$ stems from a strategy in \cite{fglps}. We now introduce the main tool that we will need.

\begin{defn}
Write $F\overset{\epsilon}{\rightarrow} H$ to mean that, for every $S \subseteq V(F)$ such that $|S|\ge \epsilon |V(F)|$, the subgraph of $F$ induced by $S$ is Ramsey for $H$ (i.e. $F[S]\to H$).
\end{defn}
The following lemma, which is a strengthening of a theorem in \cite{nr}, is proven in \cite{fglps}.
\begin{lem} \label{EpsilonArrow}
For any graph $H$ and every $\epsilon>0$ and $t>2$, if $\omega(H) < t$ then there exists a graph $F$ that is $K_t$-free such that $F\overset{\epsilon}{\rightarrow} H$.
\end{lem}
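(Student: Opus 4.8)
The plan is to bootstrap the Ne\v{s}et\v{r}il--R\"{o}dl theorem (\cite{nr}): since $\omega(H)<t$, there is a $K_t$-free graph $F_0$ with $F_0\to H$; set $m=|V(F_0)|$. First I would record two reformulations. One: $F\overset{\epsilon}{\rightarrow}H$ holds exactly when the largest non-Ramsey induced subgraph of $F$ has fewer than $\epsilon|V(F)|$ vertices. Two: it suffices to build a $K_t$-free graph $F$ together with a family $\mathcal{C}$ of vertex subsets, each inducing a supergraph of a copy of $F_0$ (hence Ramsey for $H$), so that no set of at most $(1-\epsilon)|V(F)|$ vertices meets every member of $\mathcal{C}$; equivalently, the copies of $F_0$ are spread through $F$ densely enough that one cannot destroy all of them by deleting a $(1-\epsilon)$-fraction of the vertices.

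To produce such an $F$ I would use an iterated amalgamation in the spirit of the Ne\v{s}et\v{r}il--R\"{o}dl partite construction. A convenient phrasing is to maintain a $K_t$-free graph together with a robustness parameter $\delta$ (meaning that graph is $\overset{\delta}{\rightarrow}H$) and to amplify repeatedly: pass to a larger $K_t$-free graph in which $F_0$ --- in fact a transversal copy of $F_0$ across suitably many of a large collection of blocks --- is forced to appear inside every sufficiently large vertex subset, thereby decreasing $\delta$, until $\delta<\epsilon$. Amplification is exactly the kind of step the partite construction is built for: one amalgamates many copies of the current graph over shared sub-patterns, and, as in \cite{nr}, each local amalgamation introduces only a sparse, probabilistically chosen gadget of bounded size, which is where the clique-number bookkeeping can be made to work so that $\omega$ never exceeds $\omega(F_0)<t$.

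It is worth isolating why this needs genuine work. In a dense regime the statement is soft: with $F=G(n,p)$ for $p$ between $(\epsilon n)^{-1/m_2(H)}$ and $n^{-2/(t-1)}$ --- a nonempty range precisely when $m_2(H)<(t-1)/2$, where $m_2(H)$ is the maximum $2$-density of $H$ --- a first-moment bound gives $\Pr[K_t\subseteq F]=o(1)$, while a concentration version of the random Ramsey theorem makes $\Pr[G(\lceil\epsilon n\rceil,p)\not\to H]$ small enough to survive a union bound over the at most $2^{n}$ subsets of size $\lceil\epsilon n\rceil$; and when $H$ is bipartite, a tiny independent-set blow-up of a $K_t$-free graph with small independence ratio already works, since a monochromatic edge is a monochromatic homomorphic image of $H$ and blows up to a monochromatic copy of $H$. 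The genuine difficulty is the sparse, non-bipartite case --- say $H$ an odd cycle and $t=3$ --- where no $G(n,p)$ can be simultaneously $K_t$-free and Ramsey on all linear-sized subsets.

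The main obstacle is reconciling the two demands during amplification. Every simple way of forcing $F_0$ into all linear-sized subsets --- blowing up by cliques, lexicographic powers, or joining many copies of $F_0$ --- multiplies or adds clique numbers and pushes $\omega(F)$ up to or past $t$, whereas the operations that obviously preserve $K_t$-freeness do not by themselves create the needed redundancy. So the crux is to run the Ne\v{s}et\v{r}il--R\"{o}dl amalgamation carefully enough that (a) no new large clique is ever created and (b) after amalgamation a transversal copy of $F_0$ survives in every subset of at least $\epsilon|V(F)|$ vertices, not merely in the whole graph. Making the partite bookkeeping yield this uniform-over-all-large-subsets guarantee, rather than just $F\to H$, is where I expect the real work to lie.
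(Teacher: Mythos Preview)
The paper does not prove this lemma at all: immediately before the statement it says ``The following lemma, which is a strengthening of a theorem in \cite{nr}, is proven in \cite{fglps},'' and then moves on to use it as a black box. So there is no in-paper argument to compare your proposal against.

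As for your proposal itself, it is a plan and a discussion of obstacles rather than a proof. Your first reduction is correct: once Ne\v{s}et\v{r}il--R\"odl gives a $K_t$-free $F_0$ with $F_0\to H$, the task becomes building a $K_t$-free $F$ in which every $\epsilon$-fraction of the vertices induces a copy of $F_0$. Your observations that the random graph handles the dense regime and that blow-ups handle the bipartite case, while odd cycles with $t=3$ genuinely need structural Ramsey machinery, are all accurate and show you understand why this is not a one-line corollary of \cite{nr}. The intended route --- an iterated partite amalgamation that both preserves the clique number and forces a transversal copy of $F_0$ into every linear-sized vertex subset --- is indeed the standard way this sort of density strengthening is proved, and is almost certainly what \cite{fglps} does. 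But you stop exactly at the point where the content begins: you name the ``uniform-over-all-large-subsets guarantee'' as the crux and do not carry out the partite bookkeeping that delivers it. So what you have written is a correct identification of the right tool and the right difficulty, not yet a proof; if you want a self-contained argument you will need to actually run the amalgamation (or the vertex-Ramsey iteration that underlies it) and verify both the clique bound and the density conclusion step by step.
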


We are now ready to construct a graph $G_0$ so that, for every coloring of $G_0$ without a monochromatic copy of $H_{t,2}$, a particular copy of some (arbitrary) graph $R_0$ is forced to be monochromatic. Furthermore, there is a coloring of $G_0$ where $R_0$ is red, all of the edges leaving $R_0$ are blue, there is no red $H_{t,2}$, and there is no blue $K_t$. The proof of this lemma closely follows the arguments in \cite{fglps}.
\begin{lem}\label{GadgetGraph}
Let $R_0$ be a graph that has no copy of $H_{t,2}$. Then there exists a graph $G_0$ with an induced copy of $R_0$ and the following properties:
\begin{enumerate}
\item There is a $2$-coloring of $G_0$ without a red copy of $H_{t,2}$ and without a blue copy of $K_t$ in which the edges of $R_0$ are red, and all of the edges incident to, but not contained in, $R_0$ are blue, and
\item Every $2$-coloring of $G_0$ without a monochromatic copy of $H_{t,2}$ results in $R_0$ being monochromatic.
\end{enumerate}
\end{lem}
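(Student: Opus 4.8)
The plan is to build $G_0$ in two layers: an ``inner'' layer that forces the edges of $R_0$ to be monochromatic, and an ``outer'' layer that forces all edges leaving $R_0$ to be the opposite color (in the canonical coloring), while never introducing an unwanted monochromatic copy of $H_{t,2}$. For the inner layer, I would exploit the fact that $H_{t,2}$ contains $K_t$ (indeed $K_{t+1}$-free graphs have no $H_{t,2}$ only up to this subtlety: $H_{t,2}$ has $t+1$ vertices but is far from $K_{t+1}$), together with Lemma \ref{EpsilonArrow}. Concretely, take $\epsilon$ small and let $F$ be a $K_t$-free graph with $F \overset{\epsilon}{\rightarrow} H_{t,2}$; then glue many disjoint copies of $F$ to each edge or each non-edge structure of $R_0$ so that, in any $H_{t,2}$-free coloring, within each copy of $F$ at least a $(1-\epsilon)$-fraction of vertices must lie in one color class (else that color class of $F$, being $\geq \epsilon|V(F)|$, would itself contain $H_{t,2}$). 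Since $F$ is $K_t$-free, the color forced here is not ``red for a $K_t$'' but rather is used to propagate: the key point is that the $\epsilon$-Ramsey property gives a robust way to pin down colors on a small target set (here, a copy of $R_0$) without the propagating gadget itself containing $K_t$, hence without it creating a monochromatic $H_{t,2}$.

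The main technical device I expect to need is a ``color-forcing'' subgadget: a graph that attaches to two specified vertices $x,y$ and forces the edge $xy$ (if present) or a path between $x$ and $y$ to behave uniformly, so that connectedness of $R_0$ translates into $R_0$ being monochromatic. For an edge $xy$ of $R_0$, I would want a gadget $G_{xy}$ such that in any $H_{t,2}$-free coloring, if a large monochromatic structure is forced on one side it must agree with the color of $xy$ — the standard trick is to take the $\epsilon$-Ramsey graph $F$, designate two ``terminal'' sets inside it that in every $H_{t,2}$-free coloring are forced to be monochromatic of the \emph{same} color (because they each contain $\geq \epsilon|V(F)|$ vertices and a two-coloring of $F$ with both large classes not monochromatic would contain $H_{t,2}$ in the majority class), and identify these terminals with $\{x\}$-side and $\{y\}$-side gadgetry. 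Chaining these along a spanning connected subgraph of $R_0$ forces $R_0$ monochromatic; this is item (2).

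For item (1), I would verify the canonical coloring directly. Color $R_0$ red; since $R_0$ has no $H_{t,2}$ by hypothesis, the red graph restricted to $R_0$ is fine. Color all edges incident to but not in $R_0$ blue, and color the interiors of all the glued copies of $F$ so as to realize the forced near-monochromatic pattern in blue (recall $F$ is $K_t$-free, so a blue copy of $F$ contains no blue $K_t$, hence no blue $H_{t,2}$; and since all the connective edges are blue, I must check that the blue graph globally has no $K_t$ — this holds because the blue graph is a union of $K_t$-free pieces joined only along small ($\leq$ a few vertices) interfaces, so any would-be blue $K_t$ would have to live essentially inside one $F$-copy, contradiction, provided the interfaces are chosen small enough, e.g. single vertices or independent pairs). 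The red graph is $R_0$ together with isolated structure, so it contains no red $H_{t,2}$ either.

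The step I expect to be the real obstacle is ensuring that the \emph{global} blue graph — which now contains all the inter-gadget connective edges plus all the $F$-interiors — has no blue $K_t$, and simultaneously that no blue $H_{t,2}$ sneaks in using one vertex outside a blue $K_t$; controlling $H_{t,2}$ rather than just $K_t$ is exactly where $d=2$ differs from the $d\geq 3$ case covered by Theorem \ref{BELExtension}, since an $H_{t,2}$ only needs a single extra vertex with two blue neighbors in the blue $K_t$, which is easy to create accidentally. The fix I anticipate is to make all interfaces between distinct gadget copies be single vertices (or edgeless), so that the blue graph is, up to these cut-vertices, a disjoint union of $K_t$-free graphs; then any blue $K_t$ (with $t \geq 3$) is $2$-connected and must lie inside one block, which is $K_t$-free — contradiction — and consequently a blue $H_{t,2}$, needing a blue $K_t$, cannot exist. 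Making the whole construction respect this ``single-vertex interface'' discipline while still transmitting the color constraint along all of $R_0$ is the delicate bookkeeping, and it is where I would spend most of the write-up.
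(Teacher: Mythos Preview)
Your proposal has a fatal gap at the very first step. You write ``let $F$ be a $K_t$-free graph with $F \overset{\epsilon}{\rightarrow} H_{t,2}$,'' but no such $F$ exists: since $H_{t,2}$ contains $K_t$, any $K_t$-free graph contains no copy of $H_{t,2}$ at all, let alone a monochromatic one in every $2$-coloring of a dense induced subgraph. Lemma~\ref{EpsilonArrow} requires $\omega(H) < t$, and $\omega(H_{t,2}) = t$, so it simply does not apply with $H = H_{t,2}$. All the subsequent gadget-gluing and block-decomposition bookkeeping is built on this nonexistent object. A second, independent problem: your ``chain along a spanning connected subgraph of $R_0$'' step silently assumes $R_0$ is connected, but the lemma must hold for arbitrary $H_{t,2}$-free $R_0$; indeed the very next application in the paper takes $R_0$ to be two disjoint edges.

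The paper's actual construction avoids both issues by applying Lemma~\ref{EpsilonArrow} not to $H_{t,2}$ but to $H_{t-1,1}$ (which \emph{does} satisfy $\omega(H_{t-1,1}) = t-1 < t$), obtaining $K_t$-free graphs $F_1,\dots,F_{t-2}$ with $F_i \overset{\epsilon}{\rightarrow} H_{t-1,1}$, and then taking the join $G_0 = F_1 \boxtimes \cdots \boxtimes F_{t-2} \boxtimes R_0$. The canonical coloring (each part red internally, all cross-edges blue) makes the blue graph $(t-1)$-partite, hence $K_t$-free, while red has no $H_{t,2}$ since each part is $K_t$-free or $H_{t,2}$-free. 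For property~(2), a color-pattern pigeonhole argument locates, inductively, a monochromatic $H_{t-1,1}$ inside each $F_i$, all of the same color (say red) and with all cross-edges among them and to $R_0$ blue; any blue edge of $R_0$ would then complete a blue $H_{t,2}$ using one vertex from each $H_i$ plus one more, so $R_0$ is forced monochromatic. Note this argument never touches the internal structure of $R_0$ and in particular does not need $R_0$ connected.
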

\begin{proof}
Take $\epsilon = 2^{-n-t^2}$, where $n$ is the number of vertices in $R_0$. Let $F_1, F_2, \dots ,F_{t-2}$ be copies of the graph as defined in Lemma \ref{EpsilonArrow} when applied to $H=H_{t-1,1}$. We claim that the graph $G_0:= F_1 \boxtimes F_2 \boxtimes \dots \boxtimes F_{t-2}\boxtimes R_0$ satisfies both desired conditions (see Figure \ref{fig2}). 

To see the first property, color all the edges internal to any of $F_1, F_2, \dots, F_{t-2}, R_0$ red and the remaining edges blue. There can be no monochromatic red copy of $H_{t,2}$, since each $F_i$ is $K_t$-free and $R_0$ is $H_{t,2}$-free. Furthermore, there is no blue $K_t$, since the graph induced by the blue edges is $(t-1)$-chromatic.

To see the second property, we consider some $2$-coloring $\psi$ of $G_0$ so that $G_0$ does not have a monochromatic copy of $H_{t,2}$. We show that this forces $R_0$ to be monochromatic. For a subset $S$ of the vertices and some vertex $v \not \in S$, define the \textit{color pattern $c_v$ with respect to $S$} to be the function with domain $S$ that maps a vertex $w\in S$ to the color of the edge $(v,w)$. This method was utilized in \cite{fglps}.

For a vertex $v\in F_1$, consider its color pattern $c_v$ with respect to $V(R_0)$. There are $2^n$ possible color patterns, so at least a $2^{-n}$ fraction of the vertices in $F_1$ have the same color pattern with respect to $V(R_0)$. Call the set of these vertices $S_1$. Then $|S_1| \ge 2^{-n}\cdot |V(F_1)|\ge \epsilon \cdot |V(F_1)|$, so there must exist a monochromatic copy $H_1$ isomorphic to $H_{t-1,1}$ in $S_1$. Without loss of generality, suppose $H_1$ is monochromatic in red. We claim that all the edges going from $S_1$ to $R_0$ (and in particular from $H_1$ to $R_0$) are blue. Indeed, since all vertices $v\in S_1$ have the same color pattern with respect to $R_0$, then for a fixed vertex $i\in R_0$ the edges $(i,v)$ have the same color for all $v\in S_1$. If that color is red, then $i$ along with all the vertices of $H_1$ would form a monochromatic red copy of $H_{t,2}$, which contradicts our definition of $\psi$. We now proceed inductively. Suppose we have identified red copies of $H_{t-1,1}$ labeled $H_1, \dots , H_{k-1}$ in $F_1, \dots ,F_{k-1}$ with vertex sets $V_1, \dots ,V_{k-1}$ respectively, and that all edges between these copies as well as to $R_0$ are blue. In $F_k$, at least a $2^{-n - t(k-1)}>\epsilon$ fraction of the vertices $S_k$ have the same color pattern with respect to $V(R_0)\cup V(H_1)\cup \dots \cup V(H_{k-1})$. Since $|S_k|>\epsilon \cdot |V(F_k)|$, we have $F[S_k]\to H_{t-1,1}$. Find a monochromatic copy of $H_{t-1,1}$ and call it $H_k$. Suppose $H_k$ is blue. Then, as in the case before, all the edges between $H_k$ and $R_0$, as well as to $H_1, \dots , H_{k-1}$, would have to be red, otherwise there would be a monochromatic blue copy of $H_{t,2}$. But if all these edges are red, then any vertex of $H_k$ along with $H_1$ forms a monochromatic copy of $H_{t,2}$, a contradiction. Thus, $H_k$ must be red, and consequently all edges between $H_k$ and $H_1, \dots , H_{k-1}, R_0$ must be blue, completing the inductive step. After applying this argument $t-2$ times, we have a collection $(H_1,\dots ,H_{t-2})$ of red copies of $H_{t-1,1}$ with complete bipartite blue graphs between any two of them. Now, suppose some edge in $R_0$ was blue. Then this edge, along with one vertex in each of $H_1, \dots , H_{t-2}$ and one other arbitrary vertex in $H_1$ forms a monochromatic blue copy of $H_{t,2}$. Thus, all the edges in $R_0$ must be colored red, as required.
\end{proof}

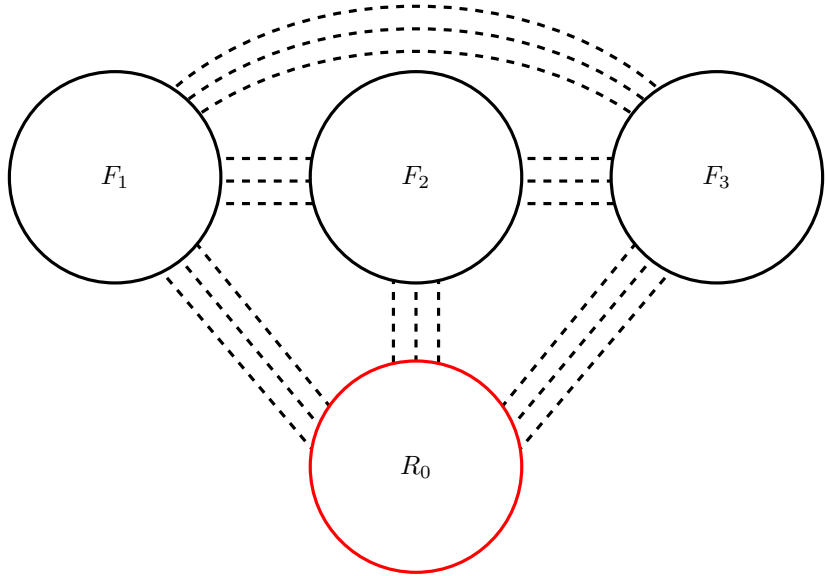
\begin{figure}[hd]
\begin{center}
\begin{tikzpicture}

\draw[dashed,very thick] (-4,0.5) -- (4,0.5);
\draw[dashed,very thick] (-4,0.8) -- (4,0.8);
\draw[dashed,very thick] (-4,1.1) -- (4,1.1);

\draw[dashed,very thick] (-4,0.5) to[out=60,in=120] (4,0.5);
\draw[dashed,very thick] (-4,0.8) to[out=60,in=120] (4,0.8);
\draw[dashed,very thick] (-4,1.1) to[out=60,in=120] (4,1.1);

\draw[dashed,very thick] (-3.6,0.8) -- (0,-3.6);
\draw[dashed,very thick] (-4,0.8) -- (0,-4);
\draw[dashed,very thick] (-4.4,0.8) -- (0,-4.4);

\draw[dashed,very thick] (3.6,0.8) -- (0,-3.6);
\draw[dashed,very thick] (4,0.8) -- (0,-4);
\draw[dashed,very thick] (4.4,0.8) -- (0,-4.4);

\draw[dashed,very thick] (-0.3,0.8) -- (-0.3,-4);
\draw[dashed,very thick] (0,0.8) -- (0,-4);
\draw[dashed,very thick] (0.3,0.8) -- (0.3,-4);

\tikzstyle{every node}=[very thick,draw,circle,fill=white,minimum size=80pt,
                            inner sep=0pt]
														
		\draw (-4,0.85) node (A) {$F_1$};
    \draw (0,0.85) node (B) {$F_2$};
    \draw (4,0.85) node (C) {$F_3$};
		
\tikzstyle{every node}=[very thick,draw=red,circle,fill=white,minimum size=80pt,
                            inner sep=0pt]
		\draw (0,-3) node (D) {$R_0$};

\end{tikzpicture}
\caption{Construction of the gadget graph $G_0$ for $t=5$ and $d=2$. The dashed lines represent complete bipartite graphs.}
\label{fig2}
\end{center}
\end{figure}

We now introduce a lemma which is a stronger version of an idea first introduced in \cite{bel} known as a positive signal sender.

\begin{lem}\label{easylemma}
There is a graph $G$ with two independent edges $e$ and $f$ so that, in any $2$-coloring of $G$ without a monochromatic copy of $H_{t,2}$, both edges $e$ and $f$ must have the same color. Furthermore, there is a $2$-coloring of $G$ with no red $H_{t,2}$ and no blue $K_t$ in which both edges $e$ and $f$ are red, and in which all of the edges incident to either of $e$ or $f$ are blue. Furthermore, there are no edges incident to both $e$ and $f$.
\end{lem}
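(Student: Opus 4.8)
The plan is to obtain $G$ as an immediate application of Lemma \ref{GadgetGraph}, choosing $R_0$ to be the smallest graph that already contains the two edges we want to force to have a common color. Concretely, I would take $R_0$ to be the graph on four vertices $\{a_1,a_2,b_1,b_2\}$ whose only edges are $e = a_1a_2$ and $f = b_1b_2$, i.e.\ two independent edges. The first thing to check is that $R_0$ contains no copy of $H_{t,2}$: since $t \ge 3$, every copy of $H_{t,2}$ contains a $K_t$ and hence a triangle, while $R_0$ is a forest, so this is clear. Thus Lemma \ref{GadgetGraph} applies and yields a graph $G_0$ with an induced copy of $R_0$; I would set $G := G_0$ and let $e, f$ denote the two edges of this induced copy. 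They are independent because the copy is induced and these are the only two edges of $R_0$.

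The ``same color'' conclusion is then exactly property (2) of Lemma \ref{GadgetGraph}: in any $2$-coloring of $G$ with no monochromatic $H_{t,2}$, the induced copy of $R_0$ is monochromatic, so $e$ and $f$ receive the same color. For the additional claims I would use property (1), which supplies a $2$-coloring of $G$ with no red $H_{t,2}$ and no blue $K_t$ in which every edge of $R_0$ is red and every edge incident to, but not contained in, $R_0$ is blue. In this coloring $e$ and $f$ are red; and any edge of $G$ other than $e$ or $f$ that shares an endpoint with $e$ or with $f$ has an endpoint in $V(R_0)$ but is not one of the two edges of $R_0$ (it cannot be $f$ if it meets $e$, nor $e$ if it meets $f$, since $e$ and $f$ are independent), hence is blue.

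It remains to verify that no edge of $G$ is incident to both $e$ and $f$, and here I would simply unwind the construction $G_0 = F_1 \boxtimes \cdots \boxtimes F_{t-2} \boxtimes R_0$ from Lemma \ref{GadgetGraph}: the only edges of $G_0$ meeting $V(R_0)$ are $e$, $f$, and the complete bipartite edges from $R_0$ to the various $F_i$, and none of the latter join an endpoint of $e$ to an endpoint of $f$. I do not anticipate a genuine obstacle here — all the difficulty has been absorbed into Lemma \ref{GadgetGraph}, and what remains is just repackaging its output into the ``positive signal sender'' form needed later; the only points deserving care are the triangle-freeness of $R_0$ (which is exactly why $t \ge 3$ matters) and the elementary bookkeeping above.
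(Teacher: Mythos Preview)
Your proposal is correct and is exactly the paper's approach: the paper's entire proof is the single line ``This follows by taking $R_0$ in the previous lemma to be two disjoint edges, $e$ and $f$.'' Your additional verifications (that $R_0$ is $H_{t,2}$-free because $t\ge 3$, and that no edge joins an endpoint of $e$ to an endpoint of $f$ because $R_0$ is induced in $G_0$) are the routine checks this one-liner leaves implicit.
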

\begin{proof}
This follows by taking $R_0$ in the previous lemma to be two disjoint edges, $e$ and $f$.
\end{proof}

We now take the above lemma and use it to prove a slight strengthening of itself.

\begin{lem}\label{DistanceTwo}
There is a graph $G$ with two independent edges $e$ and $f$ so that in any $2$-coloring of $G$ without a monochromatic copy of $H_{t,2}$ both edges $e$ and $f$ must have the same color. Furthermore, there is a $2$-coloring of $G$ with no red $H_{t,2}$ and no blue $K_t$ in which both edges $e$ and $f$ are red, and in which all of the edges incident to either of $e$ or $f$ are blue. Furthermore, any path between a vertex of $e$ and a vertex of $f$ has length at least $3$.\end{lem}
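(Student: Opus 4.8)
The plan is to build the graph claimed in Lemma~\ref{DistanceTwo} out of the graph $G'$ produced by Lemma~\ref{easylemma} by ``stretching'' one of its two signal edges into a short path while preserving the same-color constraint. Concretely, start with the graph $G'$ from Lemma~\ref{easylemma} having independent edges $e$ and $f'=\{x,y\}$ that are forced to agree in color in every $H_{t,2}$-free coloring, with the additional structural guarantees that all edges incident to $e$ or $f'$ are blue in the canonical coloring, that no edge is incident to both, and (crucially) that $e$ and $f'$ are far apart in $G'$. I would then introduce two fresh vertices $a,b$ and a new edge $f=\{a,b\}$, and glue in a \emph{second} copy $G''$ of the Lemma~\ref{easylemma} gadget whose two signal edges are identified with $f'=\{x,y\}$ and with a new edge $e''=\{x,a\}$ (or some edge at distance $\ge 1$ from $f'$ inside $G''$), arranged so that the ``incident edges are blue'' guarantee of $G''$ does not conflict with the structure around $x,y$. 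Chaining the constraints, $e$ agrees with $f'$ and $f'$ agrees with $e''$; then add one more short Lemma~\ref{easylemma}-type link forcing $e''$ to agree with $f=\{a,b\}$. The net effect is that $e$ and $f$ must have the same color, while by design the only vertices $e$ and $f$ share any short connection through are the ``hub'' vertices of the intermediate gadget, so every $e$--$f$ path is forced to pass through at least two edges of the intermediate gadget before reaching $f$.

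The key steps, in order: (1) Take two disjoint copies of the Lemma~\ref{easylemma} gadget and identify a signal edge of the first with a signal edge of the second at a common edge $g$; by transitivity of ``same color,'' the remaining signal edges $e$ and $f$ of the two copies satisfy: in any $H_{t,2}$-free coloring, $e$, $g$, and $f$ all share a color. (2) Check that the canonical colorings of the two copies are \emph{compatible}: each makes its signal edges red and all edges incident to them blue, and since (by the ``no edge incident to both'' clause applied within each copy, plus disjointness of the non-shared parts) the only shared edge is $g$ itself, we may take the union coloring; it has no red $H_{t,2}$ and no blue $K_t$ because any monochromatic copy would have to live essentially inside one of the two copies, where none exists, modulo a short argument that no new monochromatic $H_{t,2}$ is created across the identification (this uses that $H_{t,2}$ is $2$-connected for $t\ge 3$, so a copy cannot be split by a cut edge, and $g$ together with its blue-incident edges forms a cut). (3) Verify the distance claim: any path from a vertex of $e$ to a vertex of $f$ must leave the first copy through $g$ and enter the second copy through $g$, and since within the first copy $e$ is not incident to $g$ (by the ``no edge incident to both'' property, after relabeling), the path uses at least one edge before $g$ and at least one after, giving length $\ge 3$; more care is needed if $e$ and $g$ could be at distance $1$, which is exactly why I invoke the ``no edge incident to both'' clause of Lemma~\ref{easylemma} rather than just its same-color conclusion.

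I expect the main obstacle to be step (2): ensuring that pasting the two gadgets along a single shared edge does not accidentally create a new monochromatic copy of $H_{t,2}$ spanning both halves in the union of the canonical colorings, and, symmetrically, that no \emph{non-canonical} $H_{t,2}$-free coloring of the combined graph escapes the transitivity argument. The first point should follow from the $2$-connectivity of $H_{t,2}$ (for $t \ge 3$) together with the fact that the shared edge $g$ is the unique ``bridge'': any subgraph isomorphic to $H_{t,2}$ is $2$-connected and hence cannot use vertices from both sides unless it uses two independent $e$--$f$ connections, which the construction forbids. The second point is automatic since the same-color conclusions of Lemma~\ref{easylemma} hold for \emph{every} $H_{t,2}$-free coloring, so transitivity is unconditional. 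A clean way to package all of this is to observe that the construction is just a special case of the ``signal sender'' composition already implicit in Lemma~\ref{GadgetGraph}: take $R_0$ in Lemma~\ref{GadgetGraph} to be a path $P$ of length $3$ (four vertices, three edges $e, g, f$ with $e$ and $f$ its end edges) and apply the lemma; then part~(2) of Lemma~\ref{GadgetGraph} forces $P$ monochromatic, hence $e$ and $f$ agree, part~(1) gives the required canonical coloring, and the end edges of a $3$-path are automatically at distance $3$. If that shortcut works verbatim, the proof collapses to a single sentence; the real content is checking that $P_4$ (equivalently, its edge set viewed as a graph) is $H_{t,2}$-free, which is immediate since $P_4$ has only $4$ vertices and maximum degree $2 < t$.
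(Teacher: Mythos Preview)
Your main construction in steps (1)--(3)---glue two copies of the Lemma~\ref{easylemma} gadget along a common signal edge $g$ and take $e,f$ to be the two remaining signal edges---is exactly the paper's proof. Transitivity through $g$ gives the same-color constraint; the two canonical colorings agree on $g$ (red) and on its incident edges (blue), so they merge; and since ``no edge incident to both'' forces distance $\ge 2$ between signal edges within each copy, every $e$--$f$ path has length $\ge 2+2=4\ge 3$. (Your step-(3) arithmetic ``one edge before $g$ and one after gives $\ge 3$'' undercounts---that only gives $2$---but you correctly flag that the needed input is precisely the ``no edge incident to both'' clause, which upgrades each half to $\ge 2$.)

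The proposed shortcut via Lemma~\ref{GadgetGraph} with $R_0=P_4$ does \emph{not} work, for two independent reasons. Write $P_4$ as $v_1v_2v_3v_4$ with $e=\{v_1,v_2\}$, $g=\{v_2,v_3\}$, $f=\{v_3,v_4\}$. First, $v_2\in e$ and $v_3\in f$ are adjacent via $g$, so the distance between a vertex of $e$ and a vertex of $f$ is $1$, not $3$; the end-edges of a $3$-edge path are simply not far apart. Second, Lemma~\ref{GadgetGraph}(1) colors \emph{all} of $R_0$ red, so $g$ is red; but $g$ shares the vertex $v_2$ with $e$, so the requirement ``all edges incident to $e$ are blue'' fails. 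Taking a longer path for $R_0$ does not repair the second issue: the edge of $R_0$ adjacent to $e$ will always be red and incident to $e$. This is exactly why one must glue two gadgets whose only overlap is an identified signal edge rather than try to realize the whole structure inside a single $R_0$.

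Finally, your opening paragraph sets up a second gadget $G''$ whose signal edges are $f'=\{x,y\}$ and $e''=\{x,a\}$, sharing the vertex $x$; Lemma~\ref{easylemma} only supplies \emph{independent} signal edges, so that identification is unavailable. Drop that paragraph and the $P_4$ shortcut; your steps (1)--(3) already are the paper's argument.
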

\begin{proof}
Lemma \ref{easylemma} gave us a graph that satisfied all of these constraints except for the last one. Take two copies $G',G''$ of this graph from Lemma \ref{easylemma}, with distinguished pairs of edges $(e',f')$ and $(e'',f'')$, respectively. Identify $f'$ with $e''$ and take $e=e'$ and $f=f''$, and call the resulting (combined) graph $G$. By construction, any path between a vertex of $e$ and a vertex of $f$ has length at least $3$. Also by construction, in any $2$-coloring of $G$ without a monochromatic copy of $H_{t,2}$, we must have that $e=e'$ and $f'$ have the same color, and $f'=e''$ and $f''=f$ have the same color, and so $e$ and $f$ have the same color. Finally, if we color $e,f',$ and $f$ all red, then we may extend this to colorings of $G'$ and $G''$ so that neither $G'$ nor $G''$ contains a red $H_{t,2}$ or a blue $K_t$ so that all edges incident to either of $e$ or $f$ are blue. This coloring contains no red $H_{t,2}$, as every connected component in red is contained entirely within at least one of $G'$ and $G''$, and neither one of these graphs has a red copy of $H_{t,2}$. There is no blue copy of $K_t$, as every blue triangle is contained either entirely within $G'$ or entirely within $G''$, and neither one contains a blue copy of $K_t$.
\end{proof}

The next lemma uses these so-called \textit{strong positive signal senders} to construct a weaker version of BEL gadgets for $H_{t,2}$. It is weaker because it does not guarantee that we can agree with a given coloring $\psi$ of a graph up to permutation of colors; it only guarantees that in a monochromatic $H_{t, 2}$-free coloring of the graph, the edges that are red in $\psi$ all end up with one color $\alpha_1$ and the edges that are blue in $\psi$ all end up with one color $\alpha_2$. The two colors $\alpha_1$ and $\alpha_2$ may be the same. After proving this lemma, we will then show that the existence of this weaker version of BEL gadgets implies the full strength of the BEL theorem, completing the proof.

\begin{lem} \label{BELHt2}
Given edge-disjoint graphs $G_0$ and $G_1$ on the same vertex set that are both $H_{t,2}$-free, there is a graph $G$ with an induced copy of $G_0 \cup G_1$ so that there is a $2$-coloring of $G$ without a monochromatic copy of $H_{t,2}$ in which $G_0$ is red and $G_1$ is blue. Furthermore, in any $2$-coloring of $G$ without a monochromatic $H_{t,2}$, all the edges in $G_0$ have the same color and all the edges in $G_1$ have the same color.
\end{lem}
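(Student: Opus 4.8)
The plan is to build $G$ by welding strong positive signal senders onto $G_0\cup G_1$ itself. Place $G_0$ and $G_1$ on a common vertex set $U$; if one of them has at most one edge the corresponding monochromaticity claim is vacuous, so assume not, and fix reference edges $e_0\in E(G_0)$ and $e_1\in E(G_1)$. For every edge $e\in E(G_0)$ with $e\neq e_0$, take a fresh copy of the graph from Lemma \ref{DistanceTwo} and identify its two distinguished edges with $e_0$ and $e$; do the same for every other edge of $G_1$ using $e_1$ as the reference. Make the copies pairwise disjoint away from $U$, and call the result $G$. Because Lemma \ref{DistanceTwo} guarantees that in each sender any path from a vertex of one distinguished edge to a vertex of the other has length at least $3$, attaching a sender never creates an edge with both endpoints in $U$ other than the two it is being identified with; hence $G[U]=G_0\cup G_1$, giving the required induced copy.

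The forcing (``furthermore'') clause is the easy half. Given a $2$-coloring of $G$ with no monochromatic $H_{t,2}$, its restriction to each sender still has no monochromatic $H_{t,2}$, so Lemma \ref{DistanceTwo} forces that sender's two distinguished edges to agree; chaining through the senders attached to $e_0$ makes $E(G_0)$ monochromatic, and similarly $E(G_1)$.

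For the coloring clause, color $E(G_0)$ red, $E(G_1)$ blue, give each sender attached to $e_0$ the good coloring of Lemma \ref{DistanceTwo} (distinguished edges red), and give each sender attached to $e_1$ that coloring with the colors swapped (distinguished edges blue). I then have to show this has no monochromatic $H_{t,2}$; by the symmetry of the construction it is enough to forbid a red one. The first step is to locate every red $K_t$. Unwinding Lemma \ref{DistanceTwo}, a sender is two copies of the Lemma \ref{easylemma} gadget glued along an edge, and in the coloring it carries its red subgraph has no $K_t$ (for a sender attached to $e_0$ the red subgraph is even a disjoint union of the constituent $K_t$-free graphs and a few isolated edges, and for one attached to $e_1$ it is two $K_t$-free graphs sharing two vertices). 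Consequently a red $K_t$ of $G$ can use no vertex interior to a sender — such a vertex spends all of its red edges inside its own sender, which would drag the whole clique into that sender — so a red $K_t$ lies entirely in $U$; as the only red edges within $U$ are those of $G_0$, it is a $K_t$ of $G_0$.

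The hard part is the last step: showing that such a red $K_t$, on a set $W'\subseteq U$, cannot be the clique of a red $H_{t,2}$ with apex $w$. If $w\in U$ its two red edges to $W'$ lie in $G_0$, so $G_0$ would contain a copy of $H_{t,2}$, which it does not. Otherwise $w$ is interior to some sender, and all of its red neighbors lie in that sender; if the sender is attached to $e_0$ there are no red edges at all from an interior vertex to the attachment vertices, so $w$ has no red neighbor in $U$, while if the sender is attached to $e_1$ and some $e\in E(G_1)$ the distance-$3$ structure confines $w$'s attachment neighbors to one side of the sender — to the two endpoints of $e_1$, or to the two endpoints of $e$ — so for $w$ to reach two vertices of $W'$ that edge of $G_1$ would need both endpoints in a clique of $G_0$, contradicting the edge-disjointness of $G_0$ and $G_1$. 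Hence no red $H_{t,2}$ exists, and by symmetry no blue one. This final paragraph is the crux and the only place where both the edge-disjointness hypothesis and the passage from the basic senders of Lemma \ref{easylemma} to the distance-$3$ senders of Lemma \ref{DistanceTwo} are genuinely used; the surrounding bookkeeping (disjointness of the pieces, the $K_t$-freeness of the senders' red subgraphs, and the various interior-vertex neighborhoods) is routine once the construction is set up.
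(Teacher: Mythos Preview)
Your approach is the paper's: attach copies of the strong positive signal sender from Lemma~\ref{DistanceTwo} between a reference edge and every other edge of $G_i$, then verify the forcing clause and the good coloring. The paper makes one tactical choice you do not: it adds two \emph{external} reference edges $e_0,e_1$, disjoint from $V(G_0\cup G_1)$, rather than choosing $e_0\in E(G_0)$ and $e_1\in E(G_1)$. With your internal choice, whenever $e_0$ is adjacent to some $e\in E(G_0)$ you are identifying the sender's two \emph{independent} distinguished edges with an \emph{adjacent} pair, which collapses two sender vertices to one; you never address this. It can be checked that the distance-$3$ guarantee saves you (no two sender edges get merged, no copy of $H_{t,2}$ in the sender can use both collapsed preimages since $H_{t,2}$ has diameter~$2$, and the $K_t$-freeness of the relevant color class survives the collapse), but the paper's external $e_0,e_1$ sidestep the issue entirely and keep Lemma~\ref{DistanceTwo} applicable verbatim.

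A second difference: to rule out a red $K_t$ inside a sender attached on the $G_0$ side you unwind Lemmas~\ref{GadgetGraph}--\ref{DistanceTwo} to see that the red subgraph there is a disjoint union of $K_t$-free pieces, going beyond the stated conclusion of Lemma~\ref{DistanceTwo} (which only promises no red $H_{t,2}$ and no blue $K_t$). The paper instead argues in the opposite color and uses Lemma~\ref{DistanceTwo} as a black box: blue $K_t$'s in $G_0$-senders are excluded directly, and a blue $K_t$ inside a $G_1$-sender is allowed but shown harmless because the incident edges being red trap its entire blue component inside that sender, which has no blue $H_{t,2}$. Both routes work; the paper's is more modular, while yours leans on the explicit construction.
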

\begin{proof}
Take $F$ to be a copy of the graph given by Lemma \ref{DistanceTwo}.

Form a graph $G$ as follows. Start with $G_0 \cup G_1$ on the same vertex set. Add two edges $e_0$ and $e_1$ independent from both $G_0$ and $G_1$. For any edge $f_0$ in $G_0$, we add a copy of $F$ with $e_0$ and $f_0$ as the distinguished edges. For any edge $f_1$ in $G_1$, we add a copy of $F$ with $e_1$ and $f_1$ as the distinguished edges. By construction, in any $2$-coloring of $G$ without a monochromatic $H_{t,2}$, all of the edges in $G_0$ have the same color and all of the edges in $G_1$ have the same color.

Consider coloring all edges of $G_0$ as well as $e_0$ red and all edges of $G_1$ as well as $e_1$ blue. By construction of $F$, we may extend this coloring to a coloring of $G$ in which every copy of $F$ attached to two edges in $G_0$ contains no blue $K_t$ and no red $H_{t,2}$ and in which all of the edges of $F$ that are incident to the two edges are blue. Symmetrically, in this coloring every copy of $F$ attached to two edges in $G_1$ contains no red $K_t$ and no blue $H_{t,2}$ and satisfies that all of the edges of $F$ that are incident to the two edges are red.

We claim there is no blue $H_{t,2}$. By symmetry it will follow that there is also no red $H_{t,2}$. First, observe that if we pick any two edges $(e,f)$ to which a copy of $F$ is attached, the vertices of any triangle in $G$ are either contained entirely in $F$ or entirely in the graph $G'$ obtained by removing the vertices of $F$ except $e$ and $f$; this follows immediately from the construction. Note further that any triangle that is not contained entirely in $G'$ must use some vertex $w$ that belongs to $F$ but not to $G'$; since there is no vertex in $F$ that has as a neighbor both a vertex of $e$ and a vertex of $f$, such a triangle may not use both a vertex of $e$ and a vertex of $f$; in particular, this means that all of the edges used by the triangle are contained in $F$ (note that there are no edges between $e$ and $f$ that are not contained in $F$, by the way we constructed $G_0$ and $G_1$). Therefore, any copy of $K_t$ must be contained entirely in the edges of $F$ or in entirely in $G'$. Since there is no blue $K_t$ in the copies of $F$ attached to edges from $G_0$, any blue copy of $H_{t,2}$ must have its copy of $K_t$ contained entirely in $G_1$ or entirely in some copy of $F$ attached to an edge of $G_1$. If we take a blue $K_t$ contained in some copy of $F$ attached an edge $e$ and some edge $f$ in $G_1$, then, since all of the edges incident to both $e$ and $f$ are red, if we take the connected component corresponding to the blue subgraph of $G$ containing this copy of $K_t$, we see that it is contained entirely in this copy of $F$. But by assumption this copy of $F$ has no $H_{t,2}$, and so this blue $K_t$ is not contained in any copy of $H_{t,2}$. Therefore, any blue copy of $H_{t,2}$ must have its $K_t$ contained in $G_1$. By assumption, $G_1$ contains no copy of $H_{t,2}$, so this copy must have some vertex outside of $G_1$ that has blue degree at least $2$ to this copy of $K_t$. Such a vertex cannot be contained in the copies of $F$ attached to an edge of $G_1$, as these are completely red to $G_1$. Therefore, this vertex must be contained in some copy of $F$ attached to an edge $e$ and an edge $f$ of $G_0$. But neither $e$ nor $f$ may be edges of the blue clique, since they are both red, and so this vertex must have a blue neighbor in $e$ and a blue neighbor in $f$, but this contradicts our assumptions on $F$, concluding the proof.
\end{proof}

If a graph $H$ satisfies the conclusions of the above lemma, we say it has \textit{weak BEL gadgets}. We now prove that this is enough to get \textit{strong BEL gadgets} for $H_{t,2}$, thus completing the proof of the upper bound.

\begin{lem}
If $H$ is connected and has weak BEL gadgets, then $H$ has BEL gadgets.
\end{lem}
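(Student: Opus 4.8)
The plan is to sidestep negative signal senders entirely and instead invoke the weak BEL gadget property (the conclusion of Lemma~\ref{BELHt2}) a single time, applied to a cleverly chosen pair of graphs built from $G$, $\psi$, and one auxiliary copy of $H$ whose edge set is split across the two sides of the gadget.

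First I would dispose of the degenerate cases: if $H$ has at most one edge, then either no graph is $H$-free (when $H = K_1$) or every $H$-free graph is edgeless (when $H = K_2$), and in the second case we may simply take $F = G$. So from now on assume $H$ is connected with at least two edges, hence at least three vertices. Given a graph $G$ and a $2$-coloring $\psi$ of $G$ with no monochromatic copy of $H$, let $R$ and $B$ be its red and blue edge classes; both $(V(G),R)$ and $(V(G),B)$ are $H$-free. Take a copy $H'$ of $H$ on a vertex set disjoint from $V(G)$, fix an edge $e' \in E(H')$, and on the common vertex set $V := V(G)\cup V(H')$ form the edge-disjoint graphs $G_0 := (V,\, R \cup (E(H')\setminus\{e'\}))$ and $G_1 := (V,\, B \cup \{e'\})$. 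The crucial observation is that both are $H$-free. Since $H$ is connected, any copy of $H$ inside $G_0$ lies within one connected component of $G_0$, and hence is a subgraph either of $(V(G),R)$ --- which is $H$-free --- or of $H' - e'$, which on $|V(H)|$ vertices has one fewer edge than a copy of $H$ on those vertices and so cannot contain one. The same reasoning gives that $G_1$ is $H$-free, using that $(V(G),B)$ is $H$-free and that a single edge spans too few vertices to contain a copy of $H$.

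Now apply the weak BEL gadget property to $(G_0,G_1)$ to get a graph $F$ that contains $G_0\cup G_1$ as an induced subgraph, admits an $H$-free $2$-coloring in which $G_0$ is red and $G_1$ is blue, and in which \emph{every} $H$-free $2$-coloring makes all of $G_0$ one colour and all of $G_1$ one colour. I claim $F$ is the desired BEL gadget for $(G,\psi)$: it is not Ramsey for $H$ by the exhibited colouring; it contains $G$ as an induced subgraph, since restricting the induced copy of $G_0\cup G_1$ to $V(G)$ leaves exactly the edges $R\cup B = E(G)$ (no edge of $H'$ lies inside $V(G)$); and in any $H$-free colouring of $F$, were $G_0$ and $G_1$ to receive the same colour, then all edges of $H'$ --- those of $E(H')\setminus\{e'\}$ together with $e'$ --- would be that colour, giving a monochromatic copy of $H$, a contradiction. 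Hence $G_0$ and $G_1$ get opposite colours, so the colouring restricted to $G$ makes all of $R$ one colour and all of $B$ the other, i.e.\ it agrees with $\psi$ up to swapping the colours.

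The step I expect to require the most care is the $H$-freeness of $G_0$ and $G_1$ together with the fact that the auxiliary copy $H'$ cannot be absorbed into a forbidden monochromatic copy in any other way --- this is the one place the connectedness hypothesis is used, and it is what makes the splitting legitimate. I would also note why the obvious alternative is less convenient: one could instead build a negative signal sender from the weak gadgets (forcing two designated edges to get opposite colours) and glue one between a red and a blue edge of $G$, but then one must check that no monochromatic copy of $H$ runs across the gluing interface, which is a genuine annoyance because $H_{t,2}$ is only $2$-connected; the construction above avoids interfaces of that kind altogether.
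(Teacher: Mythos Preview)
Your proposal is correct and is essentially the same argument as the paper's: both split an auxiliary copy of $H$ into ``$H$ minus one edge'' on one side and ``the missing edge'' on the other, append these to the two colour classes of $\psi$, and then invoke the weak BEL gadget once, using connectedness of $H$ to certify $H$-freeness of each side and using the reassembled $H'$ to force the two sides to receive distinct colours. You are slightly more explicit than the paper about the degenerate small-$H$ cases and about why $G$ sits induced inside $F$, but the core idea is identical.
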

\begin{proof}
Consider a graph $G$ with a given $2$-coloring $\psi$. Let $G$ be composed of the graphs $G'_0$ and $G'_1$, where $G_0'$ is the graph induced by the blue edges of $G$ and $G_1'$ is the graph induced by the red edges of $G$. Take $t$ to be the number of vertices in $H$. 

Define a graph $G_0$ by taking $G_0'$, adding to it some set $S$ of $t$ vertices, and adding edges to $S$ so it forms a copy of $H$ with one edge removed. Define $G_1$ by taking $G_1'$, adding to it $S$, and adding to $S$ the edge that was removed from $H$. We will show that this resulting graph can be made a strong BEL gadget for $H$. Note that neither $G_0$ nor $G_1$ contains a copy of $H$; the connected components are either connected components of $G_0$ or $G_1$, or are in $S$. Note further that in any $2$-coloring of $G_0 \cup G_1$ in which all of the edges in $G_0$ have the same color and all of the edges of $G_1$ have the same color, if $G_0$ and $G_1$ have the same color then there is a monochromatic copy of $H$, namely on vertex set $S$. Now, taking a weak BEL gadget for $G_0$ and $G_1$ yields the desired strong BEL gadget for $G_0'$ and $G_1'$.
\end{proof}

\section{Ramsey simple graphs}\label{TriangleFree}

The lower bound $s(H)\ge 2\delta (H)-1$ is established in \cite{fl}. A natural question that arises is to classify all graphs with $s(H)$ exactly equal to $2\delta(H)-1$.
\begin{defn}
A graph $H$ that satisfies $s(H)=2\delta(H)-1$ is called \textit{Ramsey simple}.
\end{defn}
In this section, we show a specific class of graphs to be Ramsey simple. We expand on the results of \cite{fglps2}, \cite{fl}, and \cite{szz}. In particular, we prove the following theorem.
\begin{thm}\label{CycleFree}
Let $H$ be a $d$-regular graph ($d \geq 1$) with BEL gadgets. Suppose there exists a vertex $v\in H$ for which $N(v)$ is an independent set and $H-v-N(v)$ is connected. Then $s(H)=2d - 1$.
\end{thm}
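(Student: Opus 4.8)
The plan is to construct a Ramsey $H$-minimal graph $F$ containing a vertex of degree $2d-1$; together with the lower bound $s(H)\ge 2\delta(H)-1=2d-1$ from Theorem \ref{TrivialBound}, this gives $s(H)=2d-1$. The skeleton of the construction should be: take the target vertex $w$ of degree $2d-1$, split its neighborhood into a red part $R$ of size $d$ and a blue part $B$ of size $d-1$ (or a symmetric split), and arrange that in any monochromatic-$H$-free coloring of $F-w$, the vertices of $R$ can be completed to a red copy of $H\setminus v$ with $w$ playing the role of $v$ only if something forbidden happens, and similarly for $B$; hence $F\to H$, while $F-w$ is colorable, so $F$ is Ramsey but not minimal-obstructed at $w$, and passing to a minimal subgraph containing $w$ finishes. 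Here is where the hypotheses enter: because $N(v)$ is independent and $|N(v)|=d$, the $d$ red edges from $w$ can land on an independent set; because $H-v-N(v)$ is connected and $H$ is $d$-regular, the ``rest'' of $H$ (the part beyond the neighborhood of $v$) is a single connected chunk that we can force to be monochromatic using a BEL gadget.

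Concretely, I would first build, using the BEL gadget for $H$ guaranteed by hypothesis (via Theorem \ref{BELExtension}-style reasoning, or rather directly since $H$ is assumed to have BEL gadgets), a graph $F_0$ with a distinguished induced copy of $H-v$ together with an induced independent set $N$ of size $d$ playing the role of $N(v)$, such that $F_0\not\to H$ but in every monochromatic-$H$-free coloring of $F_0$ the copy of $H-v$ is forced to be (say) red, the copy of $N(v)$ sits inside it as the neighborhood-of-$v$ vertices, and all edges from $N$ into the rest of the would-be $H$ are appropriately colored — the point being that adding a new vertex adjacent to all of $N$ in red would complete a red $H$. Then take a second such structure for the blue side: a copy of $H-v$ forced blue with an independent $d$-set $N'$, but with only $d-1$ of those vertices available to $w$. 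I would overlay / amalgamate these two gadgets along a common vertex set as in the proof of Lemma \ref{BELHt2} (using positive signal senders / the identification trick of Lemma \ref{DistanceTwo} to glue without creating spurious copies of $H$), add $w$ with $d$ red-forced edges to $N$ and $d-1$ edges to a $(d-1)$-subset of $N'$, and argue: any 2-coloring of $F$ either already has a monochromatic $H$ inside the gadget part, or forces the red side red and the blue side blue; then at least one edge from $w$ to $N$ is blue or at least one edge from $w$ to the chosen subset of $N'$ is red — and in each case one uses the independence of $N(v)$ plus connectivity of $H-v-N(v)$ to complete a monochromatic $H$ through $w$. The independence of $N(v)$ is what lets $w$'s same-colored edges go to a legal neighborhood; the connectivity of $H-v-N(v)$ is what lets the BEL gadget pin down the remainder as one monochromatic block rather than several pieces whose colors we cannot coordinate.

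The main obstacle, as in Lemma \ref{BELHt2}, is controlling \emph{spurious} monochromatic copies of $H$ created by the gluing: when we amalgamate the red-forcing gadget, the blue-forcing gadget, and the vertex $w$, we must ensure that no copy of $H$ appears that straddles the interfaces in an unintended way, and that the vertex $w$ of degree $2d-1$ cannot be forced into a monochromatic $H$ in the ``good'' coloring we want to exhibit (so that $F-w$ is genuinely $H$-free under some coloring, and so that the minimal subgraph we extract really does retain $w$). This is exactly the kind of case analysis carried out at length in the proof of Lemma \ref{BELHt2} — separating triangles/cliques that live inside a signal-sender copy from those living in the ``core'' — and I expect the bulk of the work to be verifying that with $w$ having degree only $2d-1$, every monochromatic $H$ through $w$ has its non-$w$ part confined to one forced-monochromatic core, so that the forced color of that core together with the color split at $w$ produces the contradiction (or the desired monochromatic copy). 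A secondary technical point is checking that $H-v$ itself, and the various gadget pieces, are genuinely $H$-free so that Lemma \ref{BELHt2}-type inputs apply; this should follow from $d$-regularity (a proper subgraph of $H$ on $<|V(H)|$ vertices, or $H$ minus an edge, cannot contain $H$) and from the structure of the added cliques/independent sets.
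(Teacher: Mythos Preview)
Your construction does not force a monochromatic $H$. You attach a single red copy of $H-v$ at a fixed $d$-set $N$ and a single blue copy at a fixed $d$-set $N'$, then give $w$ all $d$ edges to $N$ and $d-1$ edges into $N'$. Now colour one edge from $w$ to $N$ blue, the remaining $d-1$ edges from $w$ to $N$ red, and all $d-1$ edges from $w$ into $N'$ blue. Then $w$ has only $d-1$ red neighbours in $N$, so it cannot play the role of $v$ in the red $H-v$; and it has only $d-1$ blue neighbours in $N'$ (the $d$th blue edge lands in $N$, which is not part of the blue $H-v$), so it cannot complete the blue copy either. Your disjunction ``at least one $w$--$N$ edge is blue or at least one $w$--$N'$ edge is red'' does hold here, but neither branch yields a monochromatic $H$: a single off-colour edge into the \emph{wrong} structure buys you nothing. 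The pigeonhole principle only guarantees $d$ same-coloured edges from $w$ to \emph{some} $d$-subset of its $(2d-1)$-neighbourhood, not to the particular $d$-set you fixed in advance.

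The missing idea is this: take a single independent set $S$ of size $2d-1$ and, for \emph{every} $d$-subset $S'\subseteq S$, glue on a copy of $H-v$ identifying $N(v)$ with $S'$; call the result $G$. Do this once in red and once in blue (sharing $S$), force that colouring with one BEL gadget, and connect $w$ to all of $S$. Now whichever $d$ edges from $w$ share a colour, the matching copy of $H-v$ is waiting. This also explains where the hypotheses are really used, which you have slightly misplaced: the connectivity of $H-v-N(v)$ (together with $d$-regularity) is \emph{not} needed to make the BEL gadget pin down a single block --- the gadget forces any pattern you like --- but rather to prove that the auxiliary graph $G$ is $H$-free despite containing $\binom{2d-1}{d}$ overlapping copies of $H-v$. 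The argument is that any vertex of $G$ outside $S$ has degree exactly $d$, so a copy of $H$ using it must use all its neighbours, and then connectivity of $H-v-N(v)$ propagates this to swallow an entire $H-v$ block; the one leftover vertex would then need degree $d$ into that block but can have at most $d-1$ edges into $S'$. Finally, none of the signal-sender gluing from Section~\ref{CompleteWithAddedVertex} is needed here: a single BEL gadget applied to the two-coloured $G'$ suffices, so the ``spurious copies across interfaces'' worry you raise does not arise.
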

It is worth remarking that the technical constraints in the assumptions of the theorem, about having BEL gadgets and there being a vertex $v$ for which $H - v - N(v)$ is connected, are not very restrictive. In fact, recall that all $3$-connected graphs $H$ have BEL gadgets and note that a $d$-regular triangle-free graph chosen uniformly at random satisfies these constraints with high probability for fixed $d \geq 3$ and large enough $n$. That is, the theorem is applicable to almost all $d$-regular triangle-free graphs. 

For the rest of the section, let $H$ be a $d$-regular graph with BEL gadgets, where $d \geq 1$ and let $v$ be a vertex of $H$ with $N(v)$ an independent set and $H-v-N(v)$ connected. Our proof will be divided as follows. First, we will show that there exists an $H$-free graph $G$ with an independent set $S$ of size $2d -1$ so that adding an external vertex and connecting it to any $d$ vertices of $S$ creates a copy of $H$. Once we have constructed $G$, we will create a BEL gadget and conclude that $s(H) \le 2d - 1$, from which it follows by Theorem \ref{TrivialBound} that $s(H)=2d - 1$. Our proof roughly follows the ideas of \cite{fglps2}.
\begin{lem} \label{CycleFreeIndependentLemma}
There exists an $H$-free graph $G$ with an independent set $S$ of size $2d -1$ so that adding a vertex to $G$ and connecting it to any $d$ vertices of $S$ creates a copy of $H$. 
\end{lem}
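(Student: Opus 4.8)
The plan is to build the graph $G$ as a carefully chosen "blow-up along $v$" of $H$, so that the single vertex $v$ of $H$ is replaced by a structure with $2d-1$ independent "docking" vertices, any $d$ of which, together with a fresh external vertex, complete a copy of $H$. Concretely, set $H' = H - v$ and $N = N(v) = \{u_1, \dots, u_d\}$ (an independent set). I would take $G$ to consist of a copy of $H'$ together with an independent set $S = \{s_1, \dots, s_{2d-1}\}$, where each $s_i$ is attached to $H'$ in a way that mimics the attachment of $v$: roughly, I want a collection of injections $\phi_J \colon N \hookrightarrow S$ indexed by $d$-subsets $J \subseteq S$, such that an external vertex $x$ joined to $J$ plays the role of $v$ and the rest of the copy of $H$ is carried by $H'$. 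The cleanest way to do this is to make $S$ itself behave like "$d$ parallel copies of each neighbor $u_i$" — but since $|S| = 2d-1 < d^2$ in general, I cannot afford one $s$ per (neighbor, copy) pair; instead I exploit that I only ever need to realize $N(v)$ as the image of one $d$-set at a time.

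**The key construction** I would use is the following. Since $H - v - N(v)$ is connected, I can think of $H'=H-v$ as the connected graph $H-v-N(v)$ with the $d$ leaves-ish vertices $u_1,\dots,u_d$ of $N$ attached (each $u_i$ has $d-1$ neighbors inside $H-v-N(v)$, since $v$ accounts for its last edge and $N(v)$ is independent). Now form $G$ by deleting $u_1,\dots,u_d$ from $H'$ and replacing them with the independent set $S$ of size $2d-1$; I then need to specify the edges from $S$ into $W := V(H-v-N(v))$. For each $d$-subset $J = \{s_{i_1} < \cdots < s_{i_d}\}$ of $S$, I want some bijection $J \to N$, say $s_{i_k} \mapsto u_k$, under which attaching $s_{i_k}$ to $W$ exactly as $u_k$ was attached yields: the induced graph on $W \cup J$ is isomorphic to $H - v$, and adding $x$ adjacent to all of $J$ gives $H$. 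The obstruction is consistency: different $d$-subsets $J$ impose different, possibly conflicting, edge requirements on the same vertex $s_i$. To resolve this I would instead take $2d-1$ disjoint copies $W_1, \dots$ — no; better: I take a single copy of $W$ but pick the attachment of $s_i$ to realize $u_{\pi_J(i)}$ simultaneously for all $J$, which forces $\pi_J(i)$ to be independent of $J$. That is too rigid, so the right move (following the FGLPS strategy the paper cites) is to allow $G$ to contain one private copy of $W$ for each of the $\binom{2d-1}{d}$ relevant $d$-sets $J$, all glued along $S$: for each $J$, add a fresh copy $W_J$ of $H-v-N(v)$ and attach the vertices of $J$ to $W_J$ via a fixed bijection $J \to N$ exactly as $N$ attaches to $W$ in $H$. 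The $W_J$ are otherwise vertex-disjoint.

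**I then need two things:** (i) $G$ is $H$-free, and (ii) for every $d$-set $J \subseteq S$, adding $x$ adjacent to $J$ creates an $H$ (namely $x \cup J \cup W_J$). Point (ii) is immediate by construction. Point (i) is the real content: one must rule out a copy of $H$ living in $G$. Here I would use that $H$ is $d$-regular and connected, that every $W_J$ is a copy of $H$ with a vertex and its neighborhood removed — hence $H$-free, even $H$-minus-a-vertex-free in the relevant sense — and that the only vertices of $G$ with degree possibly reaching $d$ outside a single $W_J$ are those of $S$. A vertex $s_i \in S$ has its neighbors spread across the various $W_J$ containing it; since $W_J$'s intersect only in $S$ and $S$ is independent, any subgraph of $G$ isomorphic to $H$ that uses $s_i$ would, by $d$-regularity and connectivity of $H$, be forced to lie within $\{s_i\} \cup W_J$ for a single $J$ (a neighbor of $s_i$ in $W_J$ cannot reach, within $H$-distances, a neighbor of $s_i$ in $W_{J'}$ without passing through $S$ again, and the independence of $S$ plus a degree/connectivity count should close this off) — but $\{s_i\}\cup W_J$ spans $H - v - (N(v)\setminus\{u\})$ for the single $u$ that $s_i$ realizes in $W_J$, which has too few vertices (it is $|V(H)| - d$) to contain $H$. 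A copy of $H$ avoiding $S$ entirely must lie in a single $W_J$ (the $W_J$ share only $S$), again impossible.

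**The main obstacle** I anticipate is precisely the $H$-freeness argument: making rigorous the claim that a hypothetical copy of $H$ in $G$ cannot "straddle" two of the glued-in copies $W_J$, $W_{J'}$. The clean cut should come from: $S$ is independent and is the entire intersection of distinct $W_J$'s, so any connected subgraph meeting two $W_J$'s must contain two vertices of $S$ at graph-distance (within that subgraph) at least... and then compare with the diameter/structure constraints forced by $H$ being $d$-regular with $N(v)$ independent. I would also need to double-check the degenerate small cases ($d=1$, where $H$ is a single edge or a union of edges, and $d=2$) separately, and verify the count $2d-1$ is exactly what the "any $d$ of them" phrasing needs — each $s_i$ should be usable (so $S$ can't be smaller) yet no $d$-subset should be forced to be independent-only in a way that already creates $H$ inside $G$ (so the $W_J$ copies must genuinely be disjoint off $S$). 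Once Lemma \ref{CycleFreeIndependentLemma} is in hand, the passage to a BEL gadget and then to $s(H) \le 2d-1$ follows the now-standard template already used earlier in the paper for $H_{t,d}$: attach a BEL gadget forcing $S$ monochromatic-and-isolated-looking, add the external vertex $x$ of degree $2d-1$ joined to all of $S$, split its edges $d-1$/$d$ or use the pigeonhole on any $2$-coloring to find $d$ same-colored edges into $S$, completing a monochromatic $H$ via the corresponding $W_J$.
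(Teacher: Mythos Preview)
Your construction is exactly the one the paper uses: an independent set $S$ of size $2d-1$, and for each $d$-subset $J\subseteq S$ a private copy $W_J$ of $H-v-N(v)$ glued to $J$ via a fixed bijection $J\to N(v)$ (equivalently, a copy of $H-v$ with $N(v)$ identified with $J$). Point~(ii) is immediate, as you say, and the paper agrees.

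The gap is in your sketch of the $H$-freeness argument. You propose to start from a vertex $s_i\in S$ and argue that the hypothetical copy of $H$ is ``forced to lie within $\{s_i\}\cup W_J$ for a single $J$.'' This does not follow: $s_i$ has degree $(d-1)\binom{2d-2}{d-1}$ in $G$, so $d$-regularity of $H$ tells you nothing about which $W_J$'s the copy's $d$ edges at $s_i$ enter; and even if they all landed in one $W_J$, the copy could (and would) also use the other vertices of $J\subseteq S$, so $\{s_i\}\cup W_J$ is the wrong target set. The clean fix --- and this is precisely what the paper does --- is to anchor the argument at a vertex of some $W_J$ rather than at $S$. Any vertex $w\in W_J$ has degree \emph{exactly} $d$ in $G$, with all neighbors inside $J\cup W_J$; hence if $w$ lies in a copy of $H$, all $d$ of its $H$-neighbors are there too. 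Now connectivity of $H-v-N(v)$ propagates this through all of $W_J$, and since each vertex of $J$ has a neighbor in $W_J$, the entire $(n-1)$-vertex set $J\cup W_J$ is forced into the copy. The one remaining vertex of the copy is either in $S\setminus J$ (with zero neighbors in $J\cup W_J$, since $S$ is independent and it lies in no $W_J$ meeting $W_J$) or in some $W_{J'}$ with $J'\neq J$ (with at most $|J\cap J'|\le d-1$ neighbors in $J\cup W_J$), contradicting $d$-regularity. That is the whole argument; your anticipated ``straddling'' difficulty dissolves once you start the propagation from $W_J$ instead of from $S$.
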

\begin{proof}
Construct the graph $G$ as follows. Take an independent set $S$ of size $2d -1$. For any subset $S' \subseteq S$ of size $d$, construct a copy of the graph $H-v$, and then identify $N(v)$ and $S'$ (see Figure \ref{fig4}). Do this for all size-$d$ subsets $S' \subseteq S$, and call the resulting graph $G$. Formally, $G$ has vertex set $S\cup \left({\binom{S}{d}}\times [n-d-1]\right)$. Enumerate the vertices of $H$ as $v_1, \dots , v_n$ so that $v=v_n$ and $N(v)=\{v_{n-d},\dots , v_{n-1}\}$. For every set $S' \in \binom{S}{d}$, fix an arbitrary ordering of the vertices of $S'$ labeled $v^{S'}_{n-d},v^{S'}_{n-d+1},\dots , v^{S'}_{n-1}$. The edges of $G$ that are not incident to $S$ are pairs of the form $\{(S', k_1), (S',k_2)\}$ where $(v_{k_1}, v_{k_2})$ is an edge in $H - v$. The edges of $G$ that are incident to $S$ are pairs of the form $\{v^{S'}_{k_1}, (S',k_2)\}$ where $(v_{k_1}, v_{k_2})$ is an edge in $H - v$.

We claim that the graph $G$ is $H$-free. If it is not, some vertex that is not in $S$, i.e. some vertex of the form $(S',k)$, must be used in the copy of $H$ in $G$. Let $G'$ be the induced copy of $H-v$ corresponding to $S'$; i.e. $G' = G\left[S' \cup \left\{\{S'\} \times [n-d-1]\right\}\right]$. We claim that all vertices and edges of $G'$ must be contained in the copy of $H$. To see this, note that if any vertex $(S',k)$ is used in the copy of $H$, then all of its neighbors must be as well, for it only has $d$ neighbors and $H$ is $d$-regular. By connectivity of $H-v-N(v)$, this implies that all of the vertices of the form $(S',k)$ must be used. This in turn implies that all of the edges incident to any vertex of the form $(S',k)$ must be used, but this includes all edges and vertices of $G'$ since $G'$ has no isolated vertices and $S'$ is an independent set.

Since $G'$ only consists of $n-1$ vertices, there must be exactly one vertex $v'\not \in V(G')$ that is part of the copy of $H$. But any vertex not in $G'$ can have at most $d-1$ neighbors in $G'$. This is a contradiction, since $v'$ must have degree $d$, as $H$ is $d$-regular. Thus $G$ can contain no copy of $H$.
\end{proof}

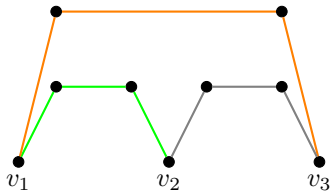
\begin{figure}[h!]
\begin{center}
\begin{tikzpicture}

\tikzstyle{every node}=[draw,circle,fill=black,minimum size=4pt,
                            inner sep=0pt]

    \draw (-2,0) node (x1) [label=below:$v_1$]{};
    \draw (0,0) node (x2) [label=below:$v_2$]{};
    \draw (2,0) node (x3) [label=below:$v_3$]{};
		
		\draw (-1.5,1) node (y12) {};
    \draw (-0.5,1) node (z12) {};
    \draw (0.5,1) node (y23) {};
    \draw (1.5,1) node (z23) {};
		\draw (-1.5,2) node (y13) {};
    \draw (1.5,2) node (z13) {};

\tikzstyle{every node}=[draw=none,circle,fill=none,minimum size=0pt,
                            inner sep=0pt]

\draw[green,thick] (x1) -- (y12);
\draw[green,thick] (y12) -- (z12);
\draw[green,thick] (z12) -- (x2);

\draw[orange,thick] (x1) -- (y13);
\draw[orange,thick] (y13) -- (z13);
\draw[orange,thick] (z13) -- (x3);

\draw[gray,thick] (x2) -- (y23);
\draw[gray,thick] (y23) -- (z23);
\draw[gray,thick] (z23) -- (x3);

\end{tikzpicture}
\caption{Construction of $G$ for the cycle $C_5$ (so that $d=2$). The set $\{v_1, v_2, v_3\}$ is an independent set of size $2d - 1$. For any vertex $v$ of a $5$-cycle, $C_5 - v$ is simply the path of length $3$. Each of the colored graphs shown is a copy of that. Note that this coloring has nothing to do with the $2$-coloring in red and blue that is the subject of this paper.}
\label{fig4}
\end{center}
\end{figure}

We finish the proof of Theorem \ref{CycleFree} now by constructing the graph $F$. We require that $F\to H$ with a vertex $v$ of degree $2d - 1$; furthermore, we require $F - v \not\to H$, which completes the proof.
\begin{proof}[Proof of Theorem \ref{CycleFree}]
There exist BEL gadgets for $H$ by assumption. Take two copies of the graph $G$ obtained from Lemma \ref{CycleFreeIndependentLemma}, and identify the two independent sets $S$ of size $2d - 1$. Color one copy of $G$ red and the other copy blue. Call this colored graph $G'$ with coloring $\psi'$. Construct a BEL gadget $F'$ so that $F'$ has an induced copy of $G'$ and satisfies the following property: $F' \not\to H$ and, in any coloring of $F'$ without a monochromatic copy of $H$, the induced copy of $G'$ has the coloring $\psi'$, up to permutation of colors. Add one more vertex $v$ to $F'$ and add edges from $v$ to all of $S$; call the resulting graph $F$. The degree of $v$ is $2d - 1$, and $F-v$ is not Ramsey for $H$. It only remains to prove that $F \to H$. Consider any $2$-coloring $\psi$ of the edges of $F$. If, in this coloring, there is a monochromatic copy of $H$ in $F - v$, we are done. Otherwise, we know that the induced copy of $G'$ has coloring $\psi'$. Observe that $v$ has degree $2d - 1$, and so by the pigeonhole principle, at least $d$ of the edges incident to $v$ must have the same color, say red. Then $v$, together with these $d$ neighbors in $S$ as well as the red copy of $H - v$ corresponding to these $d$ vertices, defines a monochromatic (red) copy of $H$. Therefore, $F \to H$ giving that $s(H) \le 2d - 1$. Together with the lower bound of Theorem \ref{TrivialBound}, this implies that $s(H)= 2d - 1$.
\end{proof}

\section{Conclusion and open problems}\label{openproblems}
We calculated the value of $s(H)$ for several classes of graphs, expanding on previous results. However, there remain several interesting related problems. 

Recall that a Ramsey simple graph is a graph $H$ for which $s(H) = 2\delta(H)-1$; in such a graph, $s(H)$  is described in terms of a simple graph parameter. We are particularly interested in the following question. Note that $G(n,p)$ is the graph obtained from $K_n$ by keeping every edge independently with probability $p$, and discarding it with probability $1 - p$.

\begin{qstn}
Fix any $0 < p < 1$. For sufficiently large $n$, can $s(G(n,p))$ be described with high probability in terms of some well-known or efficiently-computable graph parameter?
\end{qstn}

In Section \ref{TriangleFree}, we determined that the lower bound $s(H) \geq 2\delta(H)-1$ is exact when $H$ is a $3$-connected $d$-regular triangle-free graph subject to a minor technical constraint. The conjecture of \cite{fglps2} remains open.
\begin{conj} \label{TriangleConj}
For all connected triangle-free graphs $H$ on at least $2$ vertices, we have $s(H)=2\delta(H)-1$.
\end{conj}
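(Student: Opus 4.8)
The natural route is to push the template of Section~\ref{TriangleFree} as far as it will go. Fix a connected triangle-free graph $H$, let $\delta=\delta(H)$, and let $v$ be a vertex of degree $\delta$. The two structural facts about triangle-free graphs that make this case more tractable than the general one are: (a) $N(v)$ is automatically an \emph{independent} set, so the independence hypothesis in Theorem~\ref{CycleFree} (and in the $3$-connected criterion of \cite{fglps2}) is free; and (b) $\omega(H)<|V(H)|$ trivially, so Lemma~\ref{EpsilonArrow} applies. The remaining ingredients are exactly the ones that Theorem~\ref{CycleFree} assumes outright: (I) an $H$-free ``petal gadget'' $G$ with an independent set $S$ of size $2\delta-1$ such that joining a new vertex to any $\delta$-subset of $S$ creates a copy of $H$; and (II) BEL gadgets (or, by the last two lemmas of Section~\ref{CompleteWithAddedVertex}, just weak BEL gadgets) for $H$. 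Granting (I) and (II), the proof of Theorem~\ref{CycleFree} goes through verbatim: glue two copies of $G$ along $S$, colour one red and one blue, force that pattern with a BEL gadget for $H$, attach $v$ to all of $S$, and finish by pigeonhole; the lower bound is Theorem~\ref{TrivialBound}. So the whole conjecture reduces to establishing (I) and (II) \emph{uniformly} over connected triangle-free $H$.

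For (I), the construction of Lemma~\ref{CycleFreeIndependentLemma} — glue a copy of $H-v$ onto each $\delta$-subset $S'\subseteq S$ by identifying $N(v)$ with $S'$ — still makes sense, and the only thing to check is $H$-freeness, i.e.\ that no copy of $H$ in $G$ spans two or more petals. The argument in the paper uses $d$-regularity (``a used petal-interior vertex forces all its neighbours'') and connectedness of $H-v-N(v)$ (``\dots hence the whole petal, hence an impossible external vertex of degree $\delta$ into the petal''). Without regularity a vertex of $H$ of degree $>\delta$ placed on a petal-interior vertex need not drag in all of the petal; without connectedness $H-v$ may already fall apart and a copy of $H$ could be assembled from pieces lying in different petals around the shared set $S$. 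I would attack this by (a) choosing $v$ so as to minimise the number of components of $H-v-N(v)$ and handling the gluing component-by-component, each component inserted in a ``protected'' way (so that its only appearances in $G$ are the intended ones touching $S$), and/or (b) reinforcing $G$ with signal-sender gadgets as in the $H_{t,2}$ argument to pin down enough of the colouring/structure that cross-petal copies are impossible. The case of $H$ with a cut vertex is where I expect this to be genuinely delicate.

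For (II), by the two concluding lemmas of Section~\ref{CompleteWithAddedVertex} it suffices to produce weak BEL gadgets, and those reduce, exactly as in Lemmas~\ref{easylemma}--\ref{BELHt2}, to constructing a strong positive signal sender for $H$: a graph with two independent edges that are forced to agree in colour in every monochromatic-$H$-free colouring, with a witnessing colouring having no red $H$, no blue $K_t$ ($t=|V(H)|$), and no short path between the two distinguished edges. The plan is to re-run Lemma~\ref{GadgetGraph}: colour a product $F_1\boxtimes\cdots\boxtimes F_{t-2}\boxtimes R_0$, where each $F_i$ is the $K_t$-free graph of Lemma~\ref{EpsilonArrow} applied to a suitable ``one-degree-down'' subgraph of $H$, and force $R_0$ monochromatic by the inductive colour-pattern argument; the blue graph stays $(t-1)$-chromatic so there is no blue $K_t$, and the red side is $K_t$-free by construction. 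For $3$-connected $H$ one can instead just invoke Theorem~\ref{BELExtension}, so the real work is again the low-connectivity case.

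I expect the main obstacle to be precisely the interaction of these two issues in the low-connectivity regime: when $H$ is only $1$- or $2$-connected, the clean dichotomy ``no blue $K_t\Rightarrow$ no blue $H$'' that drives both the petal-freeness and the signal-sender proofs for $H_{t,2}$ must be replaced by a genuinely global structural analysis of where a monochromatic copy of $H$ could sit across a small separator — and one must do this simultaneously for the gadget $G$ of part (I) and the signal sender of part (II), while keeping the witnessing colourings free of the ``wrong'' monochromatic clique. Carrying out a uniform version of this analysis — perhaps by induction on the block-cut-tree of $H$, treating each block with its boundary vertices as a labelled gadget — is, in my view, the crux; everything else is assembly from the pieces already present in Sections~\ref{CompleteWithAddedVertex} and~\ref{TriangleFree}.
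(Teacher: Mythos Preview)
The paper does not prove this statement; it is stated in Section~\ref{openproblems} as an open conjecture (attributed to \cite{fglps2}), so there is no proof in the paper to compare your attempt against. Your submission is, as you say, a plan of attack rather than a proof, and it is an accurate diagnosis of exactly why the conjecture remains open: the assembly step from Theorem~\ref{CycleFree} does go through once one has (I) the $H$-free petal gadget and (II) BEL gadgets, and you correctly locate the failure points of the existing arguments for each --- the $H$-freeness proof of Lemma~\ref{CycleFreeIndependentLemma} genuinely uses both $d$-regularity and the connectedness of $H-v-N(v)$, and Theorem~\ref{BELExtension} genuinely needs $3$-connectedness.

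That said, nothing in your outline closes either gap. Your suggestions for (I) (choose $v$ to minimise components, glue component-by-component, ``protect'' components, or add signal senders) and for (II) (rerun Lemma~\ref{GadgetGraph} with a ``one-degree-down'' subgraph of $H$ in place of $H_{t-1,1}$) are plausible first moves, but each hides a real difficulty you have not resolved: for (I), once $H$ is irregular a copy of $H$ in $G$ need not use any petal-interior vertex at full degree, so the ``drag in the whole petal'' mechanism can fail even for a single petal; for (II), the induction in Lemma~\ref{GadgetGraph} relies on the fact that attaching one external vertex to a monochromatic $H_{t-1,1}$ yields $H_{t,2}$, and there is no obvious analogue of $H_{t-1,1}$ for a general triangle-free $H$. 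So what you have written is a reasonable research outline that matches the paper's own framing of the problem, but it is not a proof, and the paper does not contain one either.
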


A question of \cite{szz} also remains open.
\begin{qstn}\label{algorithm}
Given a graph $H$ as input, is there an efficient algorithm that computes $s(H)$?
\end{qstn}

It is easy to see that, for the class of graphs $H$ that have BEL gadgets, $s(H)$ is computable. This motivates the question of which graphs have BEL gadgets. The work of \cite{bnr} shows that all $3$-connected graphs have BEL gadgets. We showed here that $H_{t,2}$ has BEL gadgets. It is also known that cycles have BEL gadgets from \cite{g}. These observations motivate the following conjecture.

\begin{conj}
All $2$-connected graphs have BEL gadgets.
\end{conj}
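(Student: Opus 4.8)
Since this is stated as a conjecture, what follows is a plausible line of attack rather than a complete argument. The plan is to reduce the conjecture to the construction of strong \emph{positive signal senders} for an arbitrary $2$-connected graph $H$, in the spirit of \cite{bel} and \cite{bnr}: a graph $P$ with two independent distinguished edges $e,f$, lying at distance at least $3$ and in no common copy of $H$, such that $P\not\to H$, every monochromatic-$H$-free $2$-coloring of $P$ assigns $e$ and $f$ the same color, and both the all-red and the all-blue assignments to $\{e,f\}$ extend to good colorings of $P$ in which the edges incident to $e$ or $f$ take the opposite color. Indeed, the chain of Lemmas \ref{easylemma}--\ref{BELHt2}, together with the last lemma of Section~\ref{CompleteWithAddedVertex}, shows that for a connected $H$ the existence of such senders already yields weak BEL gadgets and hence full BEL gadgets --- provided the no-spurious-copy arguments in those lemmas, which were written specifically for $H_{t,2}$, can be carried out for general $H$. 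So the conjecture essentially asks: does every $2$-connected $H$ admit strong positive signal senders, and can they be glued without creating unwanted copies of $H$?

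For the first half I would generalize Lemma \ref{GadgetGraph}. Fix a vertex $v$ of $H$, set $H' = H - v$, and pick an integer $t > \omega(H)$. By Lemma \ref{EpsilonArrow} there are $K_t$-free graphs $F_1,\dots,F_{t-2}$ (the exact count being a parameter to tune to $H$) with $F_i \overset{\epsilon}{\rightarrow} H'$; form $G_0 := F_1 \boxtimes \cdots \boxtimes F_{t-2} \boxtimes R_0$, where $R_0$ is the core graph one wants to force monochromatic. Coloring edges internal to a factor red and crossing edges blue, the blue graph is $(t-1)$-chromatic, hence $K_t$-free, hence $H$-free; and the red graph has no copy of $H$ because each $F_i$ is $K_t$-free, $R_0$ is $H$-free, and a copy of $H$, being $2$-connected, must lie inside a single factor. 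For the forcing direction one imitates the color-pattern/pigeonhole chaining of Lemma \ref{GadgetGraph}: extract from each factor a monochromatic $H'$ with uniform color pattern toward $R_0$ and toward the earlier copies, use $2$-connectivity of $H$ (and the fact that a crossing vertex of the right degree completes $H' = H - v$ to a copy of $H$) to force all of these copies to the same color with the opposite color toward $R_0$, and conclude that a non-monochromatic $R_0$ would close up a monochromatic $H$ through the crossing edges. Taking $R_0$ to be two disjoint edges and splicing two copies as in Lemma \ref{DistanceTwo} would then give the senders.

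The step I expect to be the genuine obstacle is controlling \emph{spurious} copies of $H$ once the senders are glued to the target graph and to one another. For $3$-connected $H$ this is free: the interfaces of the construction are single vertex pairs, which cannot separate a $3$-connected subgraph, so any copy of $H$ lies inside one block. For merely $2$-connected $H$ a copy of $H$ can split across a $2$-vertex interface, and the device this paper uses against that for $H_{t,2}$ --- the ``distance at least $3$'' condition of Lemma \ref{DistanceTwo}, the triangle/$K_t$-localization argument in Lemma \ref{BELHt2}, and the trick of re-inserting a single missing edge of $H$ on a fresh independent set --- exploits the fact that $H_{t,2}$ has essentially one ``weak spot'' (its degree-$2$ vertex). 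A uniform version seems to require: (i) classifying, via the Tutte/SPQR decomposition of $H$ into $3$-connected pieces, cycles, and bonds, all the ways a copy of $H$ could thread through a distinguished vertex pair --- the cycle pieces being handled by \cite{g} and the $3$-connected pieces by \cite{bnr}; and (ii) designing the gadget so that for every $2$-cut of $H$ the two cut-vertices are never simultaneously reachable from both sides of any gluing. Carrying out (i) and (ii) in general is a gadget-aware analogue of $3$-block-tree surgery, and is, I believe, where the real difficulty lies --- which is presumably why the statement is posed only as a conjecture.
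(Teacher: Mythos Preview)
The paper does not prove this statement: it is listed in Section~\ref{openproblems} purely as an open conjecture, with no argument or sketch attached. You correctly recognize this and offer only a heuristic outline rather than a claimed proof, so there is no paper proof to compare your proposal against; your discussion of where the difficulty lies (spurious copies of $H$ across $2$-vertex interfaces once one drops $3$-connectivity) is a reasonable articulation of why the problem is open.
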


Finally, we present a conjecture regarding the magnitude of $s(H)$. It bounds $s(H)$ in terms of the minimum degree of $H$ and the number of vertices; this conjecture is tight for both $K_k$ and $K_k \cdot K_2$.

\begin{conj}
If $H$ is a connected graph on $n$ vertices with minimum degree $\delta$, then $s(H) \leq \delta (n-1)$.
\end{conj}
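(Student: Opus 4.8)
The plan is to produce, for a suitable vertex $v$ of $H$ of minimum degree, a graph $F$ with a distinguished vertex $w$ of degree exactly $\delta(n-1)$ such that $F\to H$ but $F-w\not\to H$; then every minimal Ramsey subgraph of $F$ must contain $w$, and so $s(H)\le\delta(n-1)$. Fix $v$ with $\deg(v)=\delta$, write $N(v)=\{u_1,\dots,u_\delta\}$, and set $W=H-v-N(v)$, a graph on $n-1-\delta$ vertices. As in Lemma \ref{Htd3} and Theorem \ref{CycleFree}, $F$ will be obtained by adjoining $w$ to a BEL gadget; so I would first prove the bound assuming $H$ has BEL gadgets (which by \cite{bnr} covers all $3$-connected $H$, and by \cite{g} all cycles) and return to the general case afterwards. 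The degenerate case $W=\varnothing$ (that is, $n=\delta+1$) should be treated directly and recovers the classical complete-graph-type constructions.

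The colored gadget $(G_0,\psi_0)$ I would use is as follows. Take $\delta$ vertex-disjoint \emph{red} copies $T_1,\dots,T_\delta$ of $H-v$, with isomorphisms $\phi_i\colon H-v\to T_i$. Whenever $u_iu_j\in E(H)$, add a complete \emph{blue} bipartite graph between $V(T_i)$ and $V(T_j)$; then for every transversal $(s_1,\dots,s_\delta)\in V(T_1)\times\cdots\times V(T_\delta)$ the assignment $u_i\mapsto s_i$ realizes $H[N(v)]$ in blue, regardless of which vertex of $T_i$ the vertex $s_i$ is. For each transversal $T=(s_1,\dots,s_\delta)$ add a fresh \emph{blue} copy $S_T$ of $W$ together with blue edges from $S_T$ to $\{s_1,\dots,s_\delta\}$ realizing the $W$-to-$N(v)$ adjacencies of $H$, keeping the $S_T$ for distinct transversals disjoint; then $S_T\cup\{s_1,\dots,s_\delta\}$ is a blue copy of $H-v$ for every $T$. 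Finally let $F$ be a BEL gadget forcing $(G_0,\psi_0)$ up to swapping colors, with a vertex $w$ joined to every vertex of every $T_i$, so $\deg(w)=\delta(n-1)$. (One checks that for $H=K_k$ this reproduces the complete $(k-1)$-partite blow-up and the vertex of degree $(k-1)^2$, and for $H=K_k\cdot K_2$ it reproduces the value $k$, matching the known tight cases.)

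For the implication $F-w\not\to H$, one checks $\psi_0$ has no monochromatic $H$: in red the components lie inside the $T_i\cong H-v$ (the $S_T$ carry no red edge), so there is no red $H$; for blue, one must show that the blue graph---a blow-up of $H[N(v)]$ with pendant blue copies of $W$ hung along transversals---contains no copy of $H$. The key structural fact is that $\omega(H[N(v)])\le\omega(H)-1$ (a clique in $N(v)$ together with $v$ is larger), so every blue clique of size $\omega(H)$ must lie inside one of the copies $S_T\cup\{s_1,\dots,s_\delta\}$ of $H-v$, and one then argues that a connected blue $H$ cannot escape such an $(n-1)$-vertex copy. For $F\to H$: in a $2$-coloring of $F$ with no monochromatic $H$ in $F-w$, the gadget forces $(G_0,\psi_0)$, so WLOG each $T_i$ is red; if $w$ were red to all of some $T_i$ then $w$ together with $T_i$ would form a red $H$, so $w$ has a blue neighbor $s_i\in V(T_i)$ for each $i$, and then $w$ together with the blue copy $S_T\cup\{s_1,\dots,s_\delta\}$ of $H-v$ (with $T=(s_1,\dots,s_\delta)$) is a blue $H$.

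The main obstacle is the blue half of the $H$-freeness check: a blue $H$ inside one of the copies of $H-v$ is excluded, but bounding how far a blue $H$ can reach into the rest of the blue graph seems to require control on the connectivity of $H$ (two distinct transversals overlap in at most $\delta-1$ coordinates, so copies $S_T\cup\{\cdot\}$ and $S_{T'}\cup\{\cdot\}$ meet in few vertices), and when $H$ has small connectivity---e.g.\ a leaf, or more generally when $H$ admits a homomorphism onto $H[N(v)]$, as bipartite $H$ typically do---the naive gadget can in fact contain a blue $H$. I would handle such $H$ separately: many of them (forests, even cycles, $3$-connected bipartite graphs, $3$-connected triangle-free graphs with the technical constraint of Theorem \ref{CycleFree}) are already known to be Ramsey simple, giving the far stronger bound $s(H)=2\delta-1\le\delta(n-1)$, and the remaining low-connectivity cases should admit an analogous but sparser gadget, possibly built from $\omega(H)$-free graphs that are $\epsilon$-Ramsey for $H-v$ in the sense of Lemma \ref{EpsilonArrow} to kill unwanted blue copies. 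The second genuine obstacle is the reliance on BEL gadgets: since it is only conjectured that all $2$-connected (let alone all connected) graphs have them, a proof of the full conjecture would either need a construction bypassing the BEL machinery or progress on the BEL-gadget conjecture itself.
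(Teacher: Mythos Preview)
The paper does not prove this statement: it is presented in Section~\ref{openproblems} as an open conjecture, with no proof offered. There is therefore nothing in the paper to compare your proposal against.

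As for the proposal itself, you are essentially attempting to extend the construction of Lemma~\ref{Htd3} from $H_{t,d}$ to an arbitrary connected $H$, and you correctly identify the two genuine obstructions. First, the blue $H$-freeness of $(G_0,\psi_0)$ is not established: your argument that a blue $\omega(H)$-clique must sit inside some $S_T\cup\{s_1,\dots,s_\delta\}$ does not by itself confine a blue copy of $H$ there, and indeed you note that for many $H$ (e.g.\ bipartite $H$, or $H$ with a leaf) the blue graph can contain $H$ outright, since a blow-up of $H[N(v)]$ may already be rich enough. Patching this by appealing to Ramsey simplicity for some families and ``sparser gadgets'' for others is a plan, not a proof. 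Second, the entire scheme presupposes BEL gadgets for $H$, which is itself an open problem for graphs that are not $3$-connected; you acknowledge this, but it means the argument as written cannot reach all connected $H$ even in principle.

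In short: your proposal is a reasonable line of attack that faithfully generalizes the paper's own machinery, and your self-diagnosis of its gaps is accurate, but it is not a proof of the conjecture, and the paper does not claim one either.
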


\section{Acknowledgments}
We thank Prof. Jacob Fox, Dr. Tanya Khovanova, and Mr. Antoni Rangachev for useful comments and discussion.

\end{document}